\definecolor{forestgreen}{cmyk}{0.91,0,0.88,0.12}
\definecolor{darkorange}{rgb}{1.00,0.55,0.00}
\definecolor{violet}{rgb}{0.5,0,0.8}
\newtheorem{theorem}{Theorem}[section]
\newtheorem{lemma}{Lemma}[section]
\newtheorem{comment}{Comment}[section]
\numberwithin{equation}{section}
\begin{document}
\begin{frontmatter}
\title{Numerical analysis of a semi-implicit Euler scheme for the Keller-Segel model}
\tnotetext[]{X. Huang's work is supported by China Scholarship Council grant 202306310119; O. Goubet's work is supported by Labex CEMPI (ANR-11-LABX-0007-01); J. Shen's work is partially supported by NSFC grant 12371409.}

\author[label1]{Xueling Huang}
\ead{hxlmath@163.com}
\author[label2]{Olivier Goubet\corref{cor1}}
\ead{olivier.goubet@univ-lille.fr}\cortext[cor1]{Corresponding author.}
\author[label3,label1]{Jie Shen}
\ead{jshen@eitech.edu.cn}

\address[label1]{School of Mathematical Sciences and Fujian Provincial Key Laboratory on Mathematical Modeling and High Performance Scientific Computing, Xiamen University, Xiamen, Fujian, 361005, China.}
\address[label2]{Laboratoire Paul Painlev$\acute{e}$ CNRS UMR 8524, et $\acute{e}$quipe projet INRIA PARADYSE, Universit$\acute{e}$ de Lille, Lille, 59000, France.}
\address[label3]{School of Mathematical Science, Eastern Institute of Technology, Ningbo, Zhejiang 315200, China.}

\begin{abstract}
We study the properties of a semi-implicit Euler scheme that is widely used in time discretization of Keller-Segel equations both in the parabolic-elliptic form and the parabolic-parabolic form.
We prove that this linear, decoupled, first-order scheme preserves unconditionally the important properties of Keller-Segel equations  at the semi-discrete level, including the mass conservation and positivity preserving of the cell density, and the energy dissipation.
We also establish optimal  error estimates in $L^p$-norm $(1<p<\infty)$.
\end{abstract}
\begin{keyword}
Keller-Segel equations, mass conservation, positivity preserving, energy dissipation, error estimates

$\emph{2010 MSC:}$
92C17 \sep 65M12 \sep 35K20 \sep 35B09

\end{keyword}
\end{frontmatter}

\section{Introduction}
Chemotaxis refers to the directional migration of cells along the concentration gradient of a certain chemoattractant.
In the 1970s, Keller \cite{KS1970} and Segel \cite{KS1971} established the first mathematical model describing chemotaxis, which is also considered as a further development of the work of Patlak \cite{PKS1953}.
In this paper we focus on the general dimensionless Keller-Segel (KS) model \cite{Nagai1997}
\begin{align}
&\frac{\partial \rho}{\partial t}
= \Delta \rho- \chi\nabla \cdot \left(\rho \nabla c \right), \ \ && \boldsymbol{x}\in \Omega, \ t>0, \label{2.1} \\
&\tau \frac{\partial c}{\partial t}= \Delta c - \alpha c + \gamma\rho, \ \ &&\boldsymbol{x}\in \Omega, \ t>0, \label{2.2}
\end{align}
subjected to suitable initial condition
\begin{equation}\label{2.3}
\rho|_{t=0} = \rho_{0}\geq 0, \ \ \tau c|_{t=0} = \tau c_{0}\geq 0, \ \ \text{and} \ \ \rho_{0}, c_0 \not\equiv 0
\end{equation}
in a bounded domain $\Omega\subset\mathbf{R}^{2}$ with smooth boundary $\partial\Omega$.
We consider the following boundary conditions:
\begin{itemize}
  \item periodic boundary conditions for the cell density $\rho=\rho(\boldsymbol{x},t)$ and the concentration of chemoattractants $c=c(\boldsymbol{x},t)$; or
  \item Neumann boundary conditions for $\rho$ and $c$, that is
      \begin{equation}\label{1.4}
      \frac{\partial \rho}{\partial \boldsymbol{n}}=  \frac{\partial c}{\partial \boldsymbol{n}}=0,\ \ \boldsymbol{x}\in \partial\Omega, \ t>0
      \end{equation}
      where $\boldsymbol{n}$ is the outward unit-normal to the boundary $\partial\Omega$.
\end{itemize}
In the above, parameters $\chi>0$ is the sensitivity of cells to the chemoattractant,
$\alpha, \gamma>0$ represents the consumption and production rate of chemoattractant, respectively.
The model is a parabolic-parabolic system when $\tau > 0$, and
a parabolic-elliptic system when $\tau = 0$, which means that chemoattractant diffusion is much faster than cell diffusion.
Note that if $\tau = 0$, only the initial data $\rho_0$ is needed in \eqref{2.3}.

There have been many mathematical studies on the KS model above.
Numerous researches focused on global existence and boundedness of solutions along time (see, e.g. \cite{TW2015,Horstmann2003,Hillen2009,Nagai2001} and references therein).
In two-dimensional (2D) domains, the global existence depends on a threshold: when the initial mass lies below the threshold solutions exist globally and are $L^{\infty}$ bounded, while above the threshold solutions blow up and are unbounded in $L^{\infty}$-norm.
Espejo et al. \cite{EEE2012,EEE2009,EEE2013} analysed the blow-up conditions of the KS system in a 2D ball with a radial symmetric setting, and in in the whole space $\mathbf{R}^{2}$ without the radially symmetric setting.
For the existence results of solutions to KS equations in three or higher dimensional domains, we refer to \cite{A2021,TW2015,C2004,Horstmann2003} and references therein.
Since the existence of solutions in higher space dimensions is different from that in 2D, we only focus on the 2D problem in this paper.

The KS equations  enjoy some important properties, such as mass conservation and positivity preserving for the cell density, and
the energy dissipation (see, e.g. \cite{Nagai1995,G1998,Nonnegative,Nagai1997,Saito2005} and references therein).
A good numerical method should preserve these properties at the discrete level as much as possible, and  its implementation should be simple and amenable to  rigorous numerical analysis.

Over the years, various  numerical schemes  have been developed for the KS equations (see, for instance, \cite{Saito2012,GS2021,Chat2022,Acosta2023} and the references therein).
Most positive-preserving  schemes  depend on a particular spatial discretization and may lead to strict CFL restrictions on the time step.
For examples, finite difference and finite element methods are often combined with upwind techniques \cite{Liu2018,Saito2012,Saito2005}, flux correction methods \cite{Chat2022,HXL2020,Strehl2010}, total variation diminishing methods \cite{Epshteyn2009}, etc.
The shortcoming is that the energy dissipation is often lost as a result.
Zhou and Saito \cite{Zhou2021,Zhou2017} proposed special finite volume schemes that satisfy both positivity preserving and energy dissipation, and derived error estimates, while their calculations are usually not simple and flexible enough. More recently,
 fully discrete schemes which enjoy all the essential properties have been constructed by finite element discretization \cite{GS2021}, finite difference method \cite{Lu2023}, and discontinuous Galerkin method \cite{Acosta2023}. However, these papers lack the error analysis for their numerical schemes.

Another interesting scheme  is  proposed in \cite{SheX20} in which the authors  constructed a semi-discrete (in time)  nonlinear scheme based on the structure of the Wasserstein gradient flow, and showed that it enjoys unconditionally  all essentially properties of the KS equations, and the nonlinear equation is uniquely solvable as at each time step. A rigorous error analysis is carried out in \cite{CLS22} for a fully discrete scheme based on this time discretization. A distinct advantage of this scheme is that its properties can be easily carried over to many spatial discretizations, but a   main drawback of this scheme is that one needs to solve a coupled nonlinear system at each time step.

Although the general KS model \eqref{2.1}-\eqref{1.4} is nonlinear and involves two variables $\rho$ and $c$ that are coupled together, for the sake of simplicity and efficiency, one still favor a proper time-discrete scheme which is linear and decoupled.
For example,  the following semi-discrete scheme is often applied in actual computations \cite{GS2021,Saito2005}:
\begin{align}
&\frac{\rho^{n+1}-\rho^{n}}{\delta t} =
 \Delta \rho^{n+1} - \chi\nabla \cdot \left(\rho^{n+1} \nabla c^{n} \right), \label{S1}\\
&\tau\frac{c^{n+1}-c^{n}}{\delta t} = \Delta c^{n+1} - \alpha c^{n+1} + \gamma \rho^{n+1}, \label{S2}
\end{align}
with the periodic or Neumann boundary conditions,
where $\delta t$ is the time step and $\rho^{n}$ and $c^{n}$ are approximations of $\rho$ and $c$ at the $n$th time step.
At each time step, as long as ($\rho^{n}$,  $c^{n}$), $j=0,1,2,...,n$ are given, it is efficient to solve $\rho^{n+1}$ from \eqref{S1} and $c^{n+1}$ from \eqref{S2} since they are both linear elliptic equations.
As time derivatives are approximated by the stable backward Euler method, linear terms are implicitly approximated and nonlinear terms are semi-implicitly approximated.
There is sufficient numerical evidence that, with a proper spatial discretization,  this semi-implicit Euler scheme appear to be able to  preserve essential properties, such as mass conservation, energy dissipative and positivity preserving,  of the KS equations \cite{GS2021,Saito2012,Saito2005}.
However, it has not been rigorously proven that this scheme, at the semi-discrete lever, can preserve all the above properties.

First of all, it is easy to obtain the discrete mass conservation by integrating \eqref{S1}-\eqref{S2} over the domain $\Omega$ with periodic or Neumann boundary conditions for the cell density.

For the energy dissipation, it was firstly shown in \cite{Saito2005}  that   the semi-implicit Euler scheme \eqref{S1}-\eqref{S2} is energy dissipative. It is also worthy to note that
 Liu et al. \cite{Liu2018} reformulated the density equation \eqref{2.1} with $\chi=1$ as $\frac{\partial \rho}{\partial t} = \nabla \cdot \left( e^c \nabla \left(\frac{\rho}{e^c} \right) \right)$, and pointed out the equivalence between the semi-discrete scheme
\begin{align}\label{1.7}
	\frac{\rho^{n+1}-\rho^{n}}{\delta t} =
	\nabla \cdot \left( e^{c^{n}} \nabla \left(\frac{\rho^{n+1}}{e^{c^{n}}} \right) \right)
\end{align}
and the semi-implicit Euler equation \eqref{S1}. But they only carried out stability analysis  for a related coupled nonlinear scheme where $c^n$ in  \eqref{S1} is replaced by $c^{n+1}$.

Regarding the positivity of numerical solution $\rho^n$,
there seems to be a general consensus among researchers that this scheme appears to satisfy  the maximal principle, but to the best of our knowledge, there is no rigorous proof of this fact.

In addition, to the best of our knowledge, there is no error analysis for the semi-implicit Euler scheme \eqref{S1}-\eqref{S2}. Moreover, as pointed out  in  \cite{Liu2018,Saito2005}, naive space discretizations of this semi-discrete scheme can easily destroy the positivity of the solution and trigger instability.

\begin{comment}
A natural idea is to perform spatial discretization based on the semi-discrete scheme \eqref{S1}-\eqref{S2} to construct fully discrete  schemes which can preserve the properties (mass conservation, positivity preserving and energy dissipation) at the fully discrete level.
Unfortunately, the authors in  \cite{Liu2018,Saito2005}  pointed out that naive space discretizations of this semi-discrete scheme can easily destroy the positivity of the solution and trigger instability.
Therefore, additional correction techniques are often added when performing spatial discretization. For examples, finite difference and finite element methods are often combined with upwind techniques \cite{Liu2018,Saito2012,Saito2005}, flux correction methods \cite{Chat2022,HXL2020,Strehl2010}, total variation diminishing methods \cite{Epshteyn2009}, etc.
The shortcoming is that the energy stability is often lost as a result.
Zhou and Saito \cite{Zhou2021,Zhou2017} proposed special finite volume schemes that satisfies both positivity preserving and energy dissipation, and derived error estimates, while their calculation is usually not simple and flexible enough.
Very recently, fully discrete schemes which enjoy all the essential properties have been constructed by finite element discretization \cite{GS2021}, finite difference method \cite{Lu2023}, and discontinuous Galerkin method \cite{Acosta2023}.
\end{comment}

To sum up the above, a comprehensive and rigorous numerical analysis of the semi-implicit Euler scheme \eqref{S1}-\eqref{S2} will not only enable us to fully understand this  semi-discrete scheme, but  also provide a basis for further error analysis of fully discrete schemes combined with this semi-discretization. The main purpose of this paper is to conduct a  rigorous numerical analysis for the semi-implicit Euler scheme \eqref{S1}-\eqref{S2}. The main contributions of this paper include
\begin{itemize}
	\item providing a rigorous proof that the scheme \eqref{S1}-\eqref{S2} satisfies unconditionally mass conservation, energy dissipative and positivity preserving;
	\item obtaining $L^p$-bounds ($1<p<\infty$) for the numerical solution of  \eqref{S1}-\eqref{S2}; and
	\item deriving optimal error estimates in $L^p$-norm ($1<p<\infty$).
\end{itemize}

The rest of paper is organized as follows.
In Section 2, we introduce and prove basic properties of the KS model \eqref{2.1}-\eqref{1.4}, including mass conservation, positivity preserving and energy dissipation.
In Section 3, a series of bounds of the cell density $\rho$ are preparatory.
In Section 4, we analysis the semi-implicit Euler scheme \eqref{S1}-\eqref{S2} keeping mass conservation, positivity preserving of the semi-discrete solution $\rho^{n}$ and the semi-discrete energy dissipation.
In Section 6, error estimates in $L^p$-norm ($1<p<\infty$) between the exact solutions ($\rho$, $c$) and semi-discrete numerical solutions ($\rho^{n+1}$, $c^{n+1}$) are derived by $L^p$-bounds ($1<p<\infty$) of $\rho^{n+1}$ for all $n$ obtained in Section 5.
We end the paper with some concluding remarks in Section 7.

We now describe some notations.Throughout the paper $W^{m,p}=W^{m,p}(\Omega)$ denotes
a Sobolev space and $H^m=W^{m,2}$ denotes a Hilbert space with inner product $(\cdot, \cdot)_{H^m}$ and norm $\|\cdot\|_{H^m}$.
Besides, $(\cdot, \cdot)$ is the $L^2$ inner product and $\|\cdot\|_{L^p}$ is the $L^p$-norm on the domain $\Omega$.
We denote by $\mathcal{L}(E,F)$ the space of bounded linear operators from the normed space $E$ into $F$.
Moreover, we shall use bold faced letters to denote vectors and vector spaces, and use
$C$ to denote a generic positive constant independent of discretization parameters.
The constant $C$ may change value from one line to one another without notice.

\section{Properties of the KS model}
There have been a number of reliable studies on the well-posedness of the initial-boundary value problem \eqref{2.1}-\eqref{1.4} (see, e.g. \cite{C2004,Nagai1995,Nagai2000,Nagai1997}).
As the results that solutions are not always existing globally in time, let $T_{ \text{max}}$ be the maximal existence time of solutions and in what follows we shall take $T\in (0,T_{\text{max}})$.
Now we begin with basic properties of the general KS model \eqref{2.1}-\eqref{1.4}, and provide detailed proofs which will be useful later in deriving similar properties for the  scheme \eqref{S1}-\eqref{S2}.

\begin{theorem}
The general KS model \eqref{2.1}-\eqref{1.4}  on $\Omega\times[0,T]$ satisfies the following properties:
\begin{enumerate}
  \item Mass conservation:
  \begin{equation}\label{2.4}
  \int_{\Omega} \rho(\boldsymbol{x}, t) d \boldsymbol{x}=\int_{\Omega} \rho_0(\boldsymbol{x}) d \boldsymbol{x}.
  \end{equation}

  \item Positivity preserving:
  if $\rho_0(\boldsymbol{x}), c_0(\boldsymbol{x})\geq 0$  and $\rho_0, c_0(\boldsymbol{x}) \not\equiv 0$, then $\rho(\boldsymbol{x},t)\geq 0$ and $c(\boldsymbol{x},t)\geq 0$;
      if moreover $\rho_0(x)>0$ then $\rho(\boldsymbol{x},t) > 0$ and $c(\boldsymbol{x},t) > 0$.

  \item Energy dissipation:
    \begin{equation}\label{2.6}
  \frac{d E_{tot}(\rho,c)}{dt}=
  - \int_{\Omega} \rho  |\nabla \left( \log \rho-\chi c \right)|^{2}
  d\boldsymbol{x} - \frac{\tau \chi}{\gamma} \int_{\Omega} \left( \frac{\partial c}{\partial t}\right)^2\leq 0,
  \end{equation}

  where the free energy of the system \eqref{2.1}-\eqref{1.4} is defined by
  \begin{equation}\label{2.7}
  E_{tot}(\rho,c)
  =\int_{\Omega}\left( f(\rho)
  - \chi \rho c
  + \frac{\chi}{2\gamma}|\nabla c|^{2}
  + \frac{\alpha \chi}{2\gamma}c^{2} \right) d\boldsymbol{x},
  \end{equation}
  with $f(\rho) = \rho \log \rho -\rho$.
\end{enumerate}
\end{theorem}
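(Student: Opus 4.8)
The plan is to treat the three properties in the order stated, since the positivity from part (2) is what makes the entropy term $\log\rho$ in part (3) meaningful, and all three rest on the same structural rewriting of the flux.

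\emph{Mass conservation.} First I would integrate \eqref{2.1} over $\Omega$ and apply the divergence theorem. Both $\int_\Omega\Delta\rho$ and $\int_\Omega\nabla\cdot(\rho\nabla c)$ reduce to boundary integrals of $\partial_n\rho$ and $\rho\,\partial_n c$, which vanish under either the periodic or the Neumann conditions \eqref{1.4}. Hence $\frac{d}{dt}\int_\Omega\rho\,d\boldsymbol{x}=0$, giving \eqref{2.4}.

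\emph{Positivity.} The key observation is that the density equation can be written in divergence form,
\[
\frac{\partial\rho}{\partial t}=\nabla\cdot\bigl(\nabla\rho-\chi\rho\nabla c\bigr)=\nabla\cdot\Bigl(e^{\chi c}\nabla\bigl(e^{-\chi c}\rho\bigr)\Bigr),
\]
a linear parabolic equation (for the smooth, already-existing solution $c$ regarded as a coefficient) with strictly positive diffusion coefficient $e^{\chi c}>0$. The weak maximum principle then yields $\rho\ge 0$ from $\rho_0\ge 0$, with no circularity since the sign of $c$ is irrelevant here; the strong maximum principle upgrades this to $\rho>0$ for $t>0$ once $\rho_0\not\equiv 0$. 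With $\rho\ge 0$ in hand, I would apply the maximum principle to the chemoattractant equation $\tau\partial_t c-\Delta c+\alpha c=\gamma\rho\ge 0$ (an elliptic problem when $\tau=0$) to conclude $c\ge 0$, and $c>0$ for $t>0$.

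\emph{Energy dissipation.} This is the computational heart of the proof. I would differentiate \eqref{2.7} in time and split the result into a part driven by $\partial_t\rho$ and a part driven by $\partial_t c$. Using $f'(\rho)=\log\rho$, the $\partial_t\rho$ contributions combine into $\int_\Omega(\log\rho-\chi c)\,\partial_t\rho$; substituting the flux form $\partial_t\rho=\nabla\cdot\bigl(\rho\nabla(\log\rho-\chi c)\bigr)$ and integrating by parts produces exactly $-\int_\Omega\rho\,|\nabla(\log\rho-\chi c)|^2\,d\boldsymbol{x}$. The remaining terms, after one integration by parts on $\int_\Omega\nabla c\cdot\nabla\partial_t c$, assemble into $-\frac{\chi}{\gamma}\int_\Omega\partial_t c\,(\Delta c-\alpha c+\gamma\rho)\,d\boldsymbol{x}$; invoking \eqref{2.2} to replace the parenthesis by $\tau\partial_t c$ yields the second dissipation term $-\frac{\tau\chi}{\gamma}\int_\Omega(\partial_t c)^2\,d\boldsymbol{x}$. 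Summing the two contributions gives \eqref{2.6}.

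I expect the main obstacle to be careful bookkeeping rather than conceptual depth: one must ensure that every integration by parts has vanishing boundary contribution in both the periodic and Neumann settings (the critical flux $\rho\nabla(\log\rho-\chi c)\cdot\boldsymbol{n}=\partial_n\rho-\chi\rho\,\partial_n c$ does vanish under \eqref{1.4}), and that all the manipulations are legitimate for the regularity class of the solution on $[0,T]$ with $T<T_{\mathrm{max}}$. Positivity of $\rho$ is the genuine prerequisite, as it is what makes $\log\rho$ and the dissipation integral well defined; thus establishing part (2) before part (3) is essential to the argument.
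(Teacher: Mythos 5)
Your mass-conservation and energy-dissipation arguments coincide with the paper's: integrate \eqref{2.1} over $\Omega$ for the mass, and pair $\partial_t\rho=\nabla\cdot(\rho\nabla(\log\rho-\chi c))$ with $\frac{\delta E_{tot}}{\delta\rho}=\log\rho-\chi c$ and the $c$-equation with $\frac{\chi}{\gamma}\partial_t c$ for the dissipation; your bookkeeping of the cross term $-\chi\rho c$ and of the boundary fluxes is exactly what the paper does. Where you genuinely diverge is the positivity of $\rho$. The paper does not invoke the maximum principle for the nonnegativity step: it multiplies \eqref{2.1} by $\operatorname{sgn}\rho$, uses Kato's inequality $\Delta\rho\,\operatorname{sgn}\rho\le\Delta|\rho|$ to get $\partial_t|\rho|\le\Delta|\rho|-\chi\nabla\cdot(|\rho|\nabla c)$, integrates, and subtracts the mass identity to conclude $\frac{d}{dt}\int_\Omega\rho_-\le 0$; this is an $L^1$-contraction argument that only needs $\nabla c$ regular enough for the integration by parts, not a bound on $\Delta c$. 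Your route --- rewriting the flux as $e^{\chi c}\nabla(e^{-\chi c}\rho)$ and applying the weak/strong maximum principle to the resulting linear parabolic equation with $c$ as a frozen coefficient --- is also correct, but after expanding the divergence the zero-order coefficient is $-\chi\Delta c$, so you need $\Delta c\in L^\infty$ (and the usual $e^{\lambda t}$ substitution to fix its sign) before the maximum principle applies; this is precisely the regularity the paper only invokes later, for the \emph{strict} positivity under $\rho_0>0$, where it uses the comparison function $\rho-\varepsilon e^{-\lambda t}$ rather than the strong maximum principle. In short: your approach is cleaner and gives the stronger conclusion $\rho>0$ for $t>0$ from $\rho_0\ge 0$, $\rho_0\not\equiv 0$, at the price of demanding more regularity of $c$ up front; the paper's Kato argument is more robust at low regularity and, importantly, is the version that transfers to the semi-discrete scheme in Section 4. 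The treatment of $c\ge 0$ via the (elliptic or parabolic) maximum principle for $\tau\partial_t c-\Delta c+\alpha c=\gamma\rho\ge 0$ matches the paper.
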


\begin{proof}
Integrating the equation \eqref{2.1} over $\Omega$, we deduce the mass conservation
\begin{equation}\label{2.11}
\frac{d}{d t} \int_{\Omega} \rho(\boldsymbol{x}, t) d\boldsymbol{x}=0,
\end{equation}
with either periodic or Neumann boundary condition for $\rho$.

Set $\rho_{+}:=\text{sup}\{\rho,0 \}$, $\rho_{-}:=\text{sup}\{-\rho,0 \}$, then $\rho = \rho_{+}-\rho_{-}$ and $|\rho| = \rho_{+}+\rho_{-}$.
Multiplying both sides of the equation \eqref{2.1} by the sign function $\text{sgn} \rho$, we have:
\begin{equation}\label{2.8}
\frac{\partial \rho}{\partial t} \text{sgn} \rho
= \Delta \rho \,\text{sgn} \rho - \chi\nabla \cdot \left(\rho \nabla c \right) \text{sgn} \rho.
\end{equation}
Recalling that $\text{sgn} \xi = \frac{\xi}{|\xi|}$ and $\xi \text{sgn} \xi= |\xi|$, we have
\begin{align*}
\frac{\partial \rho}{\partial t} \text{sgn} \rho=\frac{\partial |\rho|}{\partial t}, \ \
\nabla \cdot (\rho \nabla c) \text{sgn} \rho = \nabla \cdot (|\rho| \nabla c).
\end{align*}
By the Kato's inequality \cite{Kato},
we know that $\Delta \rho \, \text{sgn}\rho \leq \Delta |\rho|$ in $D'(\Omega)$.
Then, we have
\begin{equation}\label{2.9}
\frac{\partial |\rho|}{\partial t}\leq \Delta |\rho| - \chi\nabla \cdot (|\rho| \nabla c).
\end{equation}
Integrating both sides of the inequality over the region $\Omega$ leads to
\begin{equation}\label{2.10}
\frac{d}{d t}\int_{\Omega} |\rho| d\boldsymbol{x}
= \frac{d}{d t}\int_{\Omega} (\rho_{+}+\rho_{-}) d\boldsymbol{x} \leq 0,
\end{equation}
with periodic or Neumann boundary condition.
On the other hand, by the mass conservation, we have
\begin{equation}\label{2.10-1}
\frac{d}{d t} \int_{\Omega} \rho d\boldsymbol{x}
= \frac{d}{d t}\int_{\Omega} (\rho_{+} - \rho_{-}) d\boldsymbol{x}=0.
\end{equation}
Subtracting the above from \eqref{2.10},
we derive
\begin{equation}\label{2.11-1}
\frac{d}{d t} \int_{\Omega} \rho_{-} d\boldsymbol{x} \leq 0.
\end{equation}
Therefore,
 we have $\rho_{-}=0$ if $\rho_{0_{-}}=0$, which implies that $\rho(\boldsymbol{x},t)\geq 0$ if the initial data $\rho_{0} \geq 0$.

Moreover if $\rho_0>0$ we can prove that $\rho(\boldsymbol{x},t)>0$ as follows.
Fix $\Omega\times[0,T]$ a given cylinder.
Fix $\varepsilon>0$ small enough such that $\rho_0>\varepsilon$.
Consider the function $v(\boldsymbol{x},t)=\rho(\boldsymbol{x},t)-\varepsilon \exp(-\lambda t).$
We have
\begin{equation}\label{2.11-add}
\frac{\partial v}{\partial t}-\Delta v+\chi \nabla.(v\nabla c)
= \varepsilon \exp(-\lambda t) \left(\lambda- \chi \Delta c\right).
\end{equation}
Using the basic theory about linear elliptic and linear parabolic equations in Evans' book \cite{PDE}, we carry out the following analysis.
In the parabolic-elliptic case $\tau=0$, we have that $-\Delta c \geq -\alpha c$.
Moreover, we know that $\rho$ remains bounded in $L^\infty(\Omega)$
(see Section \ref{bounds} below).
Then $c$ is smooth  and the right hand side
of \eqref{2.11-add} is bounded from below by $\varepsilon \exp(-\lambda t) \left(\lambda- \chi \alpha \| c\|_{L^\infty}\right).$
In the parabolic-parabolic case $\tau>0$, assuming that $c_0$ is smooth enough,
we have that $\|-\Delta c\|_{L^\infty}$ is bounded and the right hand side
of \eqref{2.11-add} is bounded from below by $\varepsilon \exp(-\lambda t) \left(\lambda- \chi \|-\Delta c\|_{L^\infty}\right).$
Choosing $\lambda$ large enough such that the right hand side of this inequality is non negative and proceeding as above, we have that $\rho(\boldsymbol{x},t)$ is bounded  below by a positive function on $\Omega\times[0,T]$.

Next we prove the positivity of $c$ in the case $\tau=0$.
We derive from  \eqref{2.2} that
\begin{equation}\label{2.11-2}
c = \gamma \left( -\Delta+\alpha I\right)^{-1} \rho,
\end{equation}
where $I$ is the identity matrix and the operator $-\Delta+\alpha I$ is positive \cite{PDE}.
Therefore, we derive the positivity of $c(\boldsymbol{x},t)$ since we already showed the positivity of $\rho(\boldsymbol{x},t)$.

In the case $\tau>0$ we can use an analogous argument.
In fact the parabolic equation is the limit of the sequence of following elliptic equations
\begin{equation}\label{2.11-3}
\tau\frac{c^{n+1}-c^{n}}{\delta t} = \Delta c^{n+1} - \alpha c^{n+1} + \gamma \rho^{n+1}, \ \ \ \forall n \in \mathbb{N},
\end{equation}
which satisfy the maximum principle. Then the limit parabolic equation also satisfy the maximum principle.

Now we consider the energy dissipation.
Since  $\Delta \rho=\nabla \cdot(\rho \nabla \log \rho)$,
we rewrite \eqref{2.1} as
\begin{align}\label{2.13}
\frac{\partial \rho}{\partial t}
= \nabla \cdot
\left( \rho \nabla \log \rho \right)- \chi \nabla \cdot \left(\rho \nabla c \right) = \nabla \cdot \left( \rho\nabla
\left( \log \rho - \chi c \right) \right).
\end{align}
According to the free energy \eqref{2.7}, we know the energy derivative
$\frac{\delta E_{tot}}{\delta \rho}=\log \rho - \chi c$ and
$\frac{\delta E_{tot}}{\delta c}=-\frac{\chi}{\gamma}\Delta c + \frac{\alpha \chi}{\gamma} c - \chi\rho=-\frac{\tau \chi}{\gamma}\frac{\partial c}{\partial t}$.
Taking the inner product of \eqref{2.13} with $\frac{\delta E_{tot}}{\delta \rho}$ and \eqref{2.2} with $\frac{\chi}{\gamma}\frac{\partial c}{\partial t}$ followed by integration by parts, we obtain the energy dissipative law \eqref{2.6},
since the boundary terms vanish thanks to either to periodic or Neumann boundary conditions.

\end{proof}

\section{Bounds of the exact solution}\label{bounds}
In this section, we derive $L^p$-bounds $(1<p \leq \infty)$ of the exact solution $\rho$
of the general KS model \eqref{2.1}-\eqref{1.4} on $\Omega\times[0,T]$.
Note that $L^\infty$  bound was derived in  \cite{C2004}  in the whole space $\mathbf{R}^2$. However,  the proof is out of reach here since it depends on  Nash's inequality that is not valid for instance in periodic setting \cite{Nash}.
As for the periodic case, the authors in \cite{Horstmann2001,Nagai1997} used Trudinger-Moser inequality under the small mass condition of  $\int_{\Omega} \rho_0 d\boldsymbol{x} < \frac{4 \pi}{\chi\gamma}$ to derive uniform estimates for $\tau>0$.
The case with $\tau=0$ is considered in \cite{Biler1993,Biler1995}.

We will use frequently various continuous and discrete Gronwall lemmas.
In addition to the   classical Gronwall lemmas (see, for instance,  \cite{Emmrich}), we also need  the uniform Gronwall lemma in \cite{Temam}, which is stated here:
\begin{lemma}\label{ug}
Consider $y,g,h$ nonnegative functions such that
$$ \frac{dy}{dt} \leq g y+h, $$
\noindent and for any $t>0$
$$ \int_t^{t+1} y(s)ds\leq k_1, \; \int_t^{t+1} g(s)ds\leq k_2, \; \int_t^{t+1} h(s)ds\leq k_3. $$
\noindent Then for any $t\geq 1$ we have $y(t)\leq (k_1+k_3)\exp(k_2).$
\end{lemma}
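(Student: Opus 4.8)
The plan is to handle the differential inequality $\frac{dy}{dt} \leq gy + h$ with the classical integrating-factor trick, but to apply it over a \emph{sliding} interval $[r,t]$ whose left endpoint $r$ ranges over $[t-1,t]$, and then to average in $r$. This averaging is the mechanism that converts the integral control $\int_{t-1}^t y \leq k_1$ into the desired pointwise bound on $y(t)$; without it one only has information on the integral of $y$, not on its value at a point.

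First I would fix $t \geq 1$, so that $[t-1,t] \subset (0,\infty)$ and the hypotheses apply on this interval (in particular $\int_{t-1}^t g \leq k_2$, $\int_{t-1}^t h \leq k_3$, $\int_{t-1}^t y \leq k_1$). For each $r \in [t-1,t]$ I introduce the integrating factor $\exp\left(-\int_r^s g\right)$ and observe that
$$\frac{d}{ds}\left[ y(s)\exp\left(-\int_r^s g\right)\right] = (y'-gy)\exp\left(-\int_r^s g\right) \leq h(s)\exp\left(-\int_r^s g\right).$$
Integrating this from $r$ to $t$ and rearranging yields
$$y(t) \leq y(r)\exp\left(\int_r^t g\right) + \int_r^t h(s)\exp\left(\int_s^t g\right)ds.$$
Since $g \geq 0$ and $[r,t] \subset [t-1,t]$, every exponent satisfies $\int_r^t g \leq \int_{t-1}^t g \leq k_2$ and likewise $\int_s^t g \leq k_2$, so that
$$y(t) \leq \left( y(r) + \int_{t-1}^t h(s)\,ds\right)\exp(k_2) \leq \left( y(r) + k_3\right)\exp(k_2),$$
an inequality valid for \emph{every} $r \in [t-1,t]$.

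The final step is to integrate this last estimate with respect to $r$ over $[t-1,t]$. Because the left-hand side is independent of $r$ and $\int_{t-1}^t dr = 1$, the left side remains $y(t)$, while the right side becomes $\left(\int_{t-1}^t y(r)\,dr + k_3\right)\exp(k_2) \leq (k_1+k_3)\exp(k_2)$, which is exactly the claim. The only point that requires a little care — the main subtlety rather than a genuine obstacle — is precisely this averaging in $r$: one is tempted to select a single favorable $r$ with $y(r)\leq k_1$ via the mean value theorem for integrals, but that would presuppose continuity of $y$, whereas integrating in $r$ delivers the bound cleanly under the stated integrability hypotheses alone. Everything else is a routine use of Gronwall's integrating factor together with the nonnegativity of $g$.
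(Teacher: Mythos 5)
Your proof is correct. The paper itself does not prove this lemma --- it is quoted verbatim from Temam's book \cite{Temam} --- and your argument (integrating factor on a sliding interval $[r,t]$, bounding the exponentials by $\exp(k_2)$ using $g\geq 0$, then averaging over $r\in[t-1,t]$ to convert the integral bound $\int_{t-1}^{t}y\leq k_1$ into a pointwise bound) is precisely the classical proof given in that reference, including the correct observation that integrating in $r$ avoids any continuity assumption on $y$.
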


We derive below a  series of bound results about the exact solution $\rho$ in $L^p(\Omega)$ $(1<p \leq \infty)$ for the general KS model \eqref{2.1}-\eqref{1.4}.

\begin{lemma}\label{estintegral}
For $T\in (0,T_{\text{max}})$, if $(2+\tau)\gamma \chi C_{gn}M< 1$,
then $\rho(\boldsymbol{x}, t) \in L^{2}\left( 0,T; L^2(\Omega) \right)$ for the general KS model \eqref{2.1}-\eqref{1.4},
where $M = \int_{\Omega} \rho(\boldsymbol{x}, t) d\boldsymbol{x}$ is mass of the cell density and $C_{gn}$ is the positive constant coefficient of Gagliardo-Nirenberg inequality in 2D  domain $\Omega$.
\end{lemma}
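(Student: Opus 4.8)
The plan is to derive a differential inequality for $\|\rho\|_{L^2}^2$ by testing the density equation against $\rho$ itself, and then to absorb the resulting nonlinear term using the smallness of the mass. Concretely, I would multiply \eqref{2.1} by $\rho$, integrate over $\Omega$, and integrate by parts, the boundary terms vanishing under the periodic or Neumann conditions exactly as in the proof of Theorem~2.1, to obtain
\[
\frac{1}{2}\frac{d}{dt}\|\rho\|_{L^2}^2 + \|\nabla\rho\|_{L^2}^2
= \chi\int_\Omega \rho\,\nabla c\cdot\nabla\rho\,d\boldsymbol{x}
= -\frac{\chi}{2}\int_\Omega \rho^2\,\Delta c\,d\boldsymbol{x}.
\]
I would then eliminate $\Delta c$ through the chemoattractant equation \eqref{2.2}, writing $\Delta c = \tau\,\partial_t c + \alpha c - \gamma\rho$. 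Because $\rho\ge 0$ and $c\ge 0$ by the positivity preserving of Theorem~2.1, the contribution $-\tfrac{\alpha\chi}{2}\int_\Omega \rho^2 c$ has a favorable sign and can be discarded, leaving the genuinely nonlinear term $\tfrac{\gamma\chi}{2}\|\rho\|_{L^3}^3$ together with the time-derivative term $-\tfrac{\tau\chi}{2}\int_\Omega \rho^2\,\partial_t c$.

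The cubic term is the one governed by the smallness hypothesis. Using the two-dimensional Gagliardo-Nirenberg inequality in the form $\|\rho\|_{L^3}^3 \le C_{gn}\|\rho\|_{L^1}\|\nabla\rho\|_{L^2}^2 + C\|\rho\|_{L^1}^3$, together with the mass conservation and positivity of Theorem~2.1 which give $\|\rho\|_{L^1} = \int_\Omega \rho\,d\boldsymbol{x}=M$, one bounds $\tfrac{\gamma\chi}{2}\|\rho\|_{L^3}^3$ by a multiple of $\gamma\chi C_{gn}M\,\|\nabla\rho\|_{L^2}^2$ plus the lower-order term $C M^3$. The coefficient in front of $\|\nabla\rho\|_{L^2}^2$ is strictly smaller than $1$ precisely under the stated smallness condition, so this term is absorbed into the dissipation $\|\nabla\rho\|_{L^2}^2$ on the left, leaving a strictly positive multiple of $\|\nabla\rho\|_{L^2}^2$. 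In the parabolic-elliptic case $\tau=0$ there is no further term, and the argument closes at once: integrating in time over $(0,T)$ yields bounds on $\|\rho\|_{L^\infty(0,T;L^2)}$ and $\|\nabla\rho\|_{L^2(0,T;L^2)}$, whence $\rho\in L^2(0,T;L^2(\Omega))$ since $T<\infty$.

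The main obstacle is the extra term $-\tfrac{\tau\chi}{2}\int_\Omega \rho^2\,\partial_t c$ arising in the parabolic-parabolic case $\tau>0$, which is responsible for the factor $(2+\tau)$ in the hypothesis. The tool I would exploit here is the energy dissipation law \eqref{2.6}: integrating it in time gives $\tfrac{\tau\chi}{\gamma}\int_0^T\|\partial_t c\|_{L^2}^2\,dt \le E_{tot}(\rho_0,c_0) - E_{tot}(\rho,c)(T)$, so that, once the energy is seen to be bounded below under the smallness condition, $\partial_t c\in L^2(0,T;L^2)$. One then estimates $\rho^2\,\partial_t c$ by Hölder and Young, producing a further Gagliardo-Nirenberg factor carrying the coefficient $\tau$, which is exactly what the hypothesis $(2+\tau)\gamma\chi C_{gn}M<1$ allows to be absorbed into $\|\nabla\rho\|_{L^2}^2$. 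After this absorption the resulting differential inequality is integrated over $(0,T)$, using the finiteness of $E_{tot}(\rho_0,c_0)$ and of $\|\rho_0\|_{L^2}$, to conclude $\rho\in L^2(0,T;L^2(\Omega))$. I expect the delicate point to be the bookkeeping of constants so that the cubic term and the $\partial_t c$ term combine into exactly the threshold $(2+\tau)\gamma\chi C_{gn}M$, and the verification that the smallness condition indeed guarantees a lower bound for $E_{tot}$.
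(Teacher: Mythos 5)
Your route is genuinely different from the paper's. The paper never tests the equation with $\rho$: it works directly with the entropy dissipation term $\int_\Omega\rho|\nabla(\log\rho-\chi c)|^2\,d\boldsymbol{x}$ appearing in the energy law \eqref{2.6}, expands it as $\int\frac{|\nabla\rho|^2}{\rho}+\chi^2\int\rho|\nabla c|^2-2\chi\int\nabla\rho\cdot\nabla c$, substitutes $\Delta c$ from \eqref{2.2} into the cross term, and uses the Gagliardo--Nirenberg inequality \eqref{GN} for $\|\sqrt\rho\|_{L^4}^4$ to show that, when $(2+\tau)\gamma\chi C_{gn}M<1$, the quantity $\|\rho(t)\|_{L^2}^2$ is bounded \emph{pointwise in time} by a multiple of the instantaneous dissipation rate plus a constant; integrating \eqref{2.6} over $[0,T]$ then gives the $L^2(0,T;L^2)$ bound with no Gronwall argument and no hypothesis on $\rho_0$ beyond finite initial energy. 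Your approach (test with $\rho$, absorb the cubic term via $\|\rho\|_{L^3}^3\le C\|\rho\|_{L^1}\|\nabla\rho\|_{L^2}^2+\cdots$ and small mass, then Gronwall on $\|\rho\|_{L^2}^2$ with a coefficient involving $\|\partial_t c\|_{L^2}^2$, which is time-integrable by the energy law) is the classical small-mass argument and is in substance the proof of the paper's Lemma \ref{estL2}; it buys the stronger conclusion $\rho\in L^\infty(0,T;L^2)$ in one step, and it is not circular, since your Gronwall coefficient involves only $M$ and $\partial_t c$ rather than the a priori bound on $\int_0^T\|\rho\|_{L^2}^2$ that the paper's Lemma \ref{estL2} imports from Lemma \ref{estintegral}.

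There are, however, two concrete mismatches with the lemma as stated. First, your Gronwall argument starts from $\|\rho_0\|_{L^2}^2$, so you must assume $\rho_0\in L^2(\Omega)$; Lemma \ref{estintegral} makes no such assumption (that hypothesis is deliberately deferred to Lemma \ref{estL2}), and the dissipation-based proof needs only $E_{tot}(\rho_0,c_0)<\infty$. Second, the threshold you would obtain is not $(2+\tau)\gamma\chi C_{gn}M<1$: your absorption requires roughly $\tfrac{\gamma\chi}{2}C'M<1$ with $C'$ the constant of the $L^3$--$L^1$--$\dot H^1$ Gagliardo--Nirenberg inequality, a different constant from the $C_{gn}$ of \eqref{GN}; moreover, in your scheme the term $\tau\int\rho^2\partial_t c$ is disposed of by Young's inequality with a free small parameter and then Gronwall, so it does not naturally contribute the factor $\tau$ to the mass threshold --- the $(2+\tau)$ arises specifically from the paper's Cauchy--Schwarz step \eqref{kmg1}, which trades the $\partial_t c$ cross term against the dissipation $\frac{\tau\chi}{\gamma}\int(\partial_t c)^2$ at the cost of an extra $\tau\chi\gamma\|\rho\|_{L^2}^2$. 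So your argument establishes a closely related statement under stronger hypotheses and a differently calibrated smallness condition, rather than the lemma exactly as stated.
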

\begin{proof}
To begin with, we expand
\begin{align}\label{2.21}
\int_{\Omega} \rho  |\nabla \left( \log \rho-\chi c \right)|^{2} d\boldsymbol{x}
= \int_{\Omega} \frac{| \nabla \rho|^2}{\rho} d\boldsymbol{x}
+ \chi^2 \int_{\Omega}  \rho | \nabla c|^2 d\boldsymbol{x}
- 2 \chi \int_{\Omega} \nabla \rho \nabla c d\boldsymbol{x}.
\end{align}

According to the equation \eqref{2.2} about $c$, we have
\begin{align}\label{2.22}
\Delta c  = \tau \frac{\partial c}{\partial t} + \alpha c - \gamma\rho.
\end{align}
Combining with the Green formula and $\rho,c> 0$, we can estimate the last term in \eqref{2.21} as follows
\begin{equation}\label{2.23}
\begin{aligned}
- 2 \chi \int_{\Omega} \nabla \rho \nabla c d\boldsymbol{x}
&= 2 \chi \int_{\Omega} \rho \Delta c d\boldsymbol{x}  \\
&= 2 \chi \int_{\Omega} \rho \left( \tau \frac{\partial c}{\partial t} + \alpha c - \gamma\rho \right) d\boldsymbol{x}  \\
&\geq 2 \tau \chi \int_{\Omega} \rho \frac{\partial c}{\partial t}  d\boldsymbol{x}
- 2 \chi \gamma \| \rho \|^2_{L^2} .
\end{aligned}
\end{equation}
By Cauchy-Schwarz inequality, we know that
\begin{equation}\label{kmg1}
2 \tau \chi \int_{\Omega} \rho \frac{\partial c}{\partial t}  d\boldsymbol{x}
\geq -\left| 2 \tau \chi \int_{\Omega} \rho \frac{\partial c}{\partial t}  d\boldsymbol{x} \right|
\geq -\frac{\tau \chi}{\gamma} \int_\Omega \left(\frac{\partial c}{\partial t}\right)^2  d\boldsymbol{x} -  \tau\chi \gamma \| \rho \|^2_{L^2}.
\end{equation}
Applying the Gagliardo-Nirenberg inequality with a positive constant $C_{gn}$
\begin{equation}\label{GN}
\left\|\sqrt \rho \right\|^4_{L^4}
\leq C_{gn} \left\|\sqrt \rho \right\|^2_{L^2}\left(\left\|\sqrt \rho \right\|^2_{L^2}+\left\|\nabla \sqrt \rho \right\|^2_{L^2} \right),
\end{equation}
we find
\begin{equation}\label{2.24-0}
\begin{aligned}
-  \| \rho \|^2_{L^2}
= -  \left\|\sqrt \rho \right\|^4_{L^4}
\geq  -   C_{gn} M(M+ \left\|\nabla \sqrt \rho \right\|^2_{L^2})
=  -   C_{gn} M \left(M+ \int_{\Omega} \frac{| \nabla \rho|^2}{\rho} d\boldsymbol{x}\right),
\end{aligned}
\end{equation}
where the mass $M = \int_{\Omega} \rho(\boldsymbol{x}, t) d\boldsymbol{x}$.
Therefore, \eqref{2.21} can be estimated as
\begin{equation}\label{2.24}
\begin{aligned}
&\int_{\Omega} \rho  |\nabla \left( \log \rho-\chi c \right)|^{2} d\boldsymbol{x}+ \frac{\tau \chi}{\gamma} \int_\Omega \left(\frac{\partial c}{\partial t} \right)^2  d\boldsymbol{x}
+ (2+\tau) \chi \gamma C_{gn} M^2 \\
\geq & \left( 1-(2+\tau) \chi \gamma C_{gn} M \right)
\int_{\Omega} \frac{| \nabla \rho|^2}{\rho} d\boldsymbol{x}
+ \chi^2 \int_{\Omega}  \rho | \nabla c|^2 d\boldsymbol{x}.
\end{aligned}
\end{equation}

By the Green formula and Young's inequality, we get
\begin{align}\label{2.25}
-\frac{1}{\gamma} \int_{\Omega} \rho \Delta c d\boldsymbol{x}
=\frac{1}{\gamma} \int_{\Omega} \nabla \rho \nabla c d\boldsymbol{x}
\leq \frac{1}{2\gamma} \int_{\Omega} \frac{| \nabla \rho|^2}{\rho} d\boldsymbol{x}
+ \frac{1}{2\gamma} \int_{\Omega}  \rho | \nabla c|^2 d\boldsymbol{x}.
\end{align}
Hence, we follow the equation \eqref{2.2} and the positivity of solutions to obtain
\begin{equation}\label{2.26}
\begin{aligned}
\| \rho \|^2_{L^2}
\leq&-\frac{1}{\gamma} \int_{\Omega} \rho \Delta c d\boldsymbol{x}
+ \frac{\tau}{\gamma}\int_{\Omega} \rho\frac{\partial c}{\partial t} d\boldsymbol{x}\\
\leq&\frac{1}{2\gamma} \int_{\Omega} \frac{| \nabla \rho|^2}{\rho} d\boldsymbol{x}
+ \frac{1}{2\gamma} \int_{\Omega}  \rho | \nabla c|^2 d\boldsymbol{x}
+ \frac{\tau}{\gamma}\int_{\Omega} \rho\frac{\partial c}{\partial t} d\boldsymbol{x}.
\end{aligned}
\end{equation}
Moreover, appealing Cauchy-Schwarz inequality
\begin{equation}\label{2.27}
\frac12 \| \rho \|^2_{L^2}
\leq \frac{1}{2\gamma} \int_{\Omega} \frac{| \nabla \rho|^2}{\rho} d\boldsymbol{x}
+ \frac{1}{2\gamma} \int_{\Omega}  \rho | \nabla c|^2 d\boldsymbol{x}+ \frac{\tau^2}{2\gamma^2}\int_{\Omega} \left(\frac{\partial c}{\partial t} \right)^2 d\boldsymbol{x}.
\end{equation}

Gathering inequalities \eqref{2.24} and \eqref{2.27},  we infer that if $(2+\tau)\chi\gamma C_{gn}M< 1$,
then for some constant $C$
\begin{equation}\label{2.28}
\| \rho \|^2_{L^2} \leq C \left(\int_{\Omega} \rho  |\nabla \left( \log \rho-\chi c \right)|^{2} d\boldsymbol{x}+ \frac{\tau \chi}{\gamma} \int_\Omega \left(\frac{\partial c}{\partial t} \right)^2  d\boldsymbol{x}
+ (2+\tau) \chi \gamma C_{gn} M^2 \right),
\end{equation}
that is integrable in $[0,T]$, due to the energy dissipation \eqref{2.6}.
\end{proof}

\begin{lemma}\label{estL2}
If initial data $\rho_0 \in L^2(\Omega)$ and Lemma \ref{estintegral} holds, then
$\rho(\boldsymbol{x},t) \in L^{\infty}\left( 0,+\infty; L^2(\Omega) \right)$
for the general KS model \eqref{2.1}-\eqref{1.4}.
\end{lemma}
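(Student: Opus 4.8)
The plan is to derive an energy-type differential inequality for $y(t):=\|\rho(t)\|_{L^2}^2$ and then close it with the uniform Gronwall lemma (Lemma \ref{ug}), using the integrability supplied by Lemma \ref{estintegral}. First I would test the density equation \eqref{2.1} with $\rho$ and integrate by parts, obtaining
\[
\frac12\frac{d}{dt}\|\rho\|_{L^2}^2 + \|\nabla\rho\|_{L^2}^2
= \chi\int_\Omega \rho\,\nabla\rho\cdot\nabla c\, d\boldsymbol{x}
= -\frac{\chi}{2}\int_\Omega \rho^2\,\Delta c\, d\boldsymbol{x}.
\]
Substituting $\Delta c=\tau\,\partial_t c+\alpha c-\gamma\rho$ from \eqref{2.2} and discarding the nonpositive contribution $-\frac{\alpha\chi}{2}\int_\Omega\rho^2 c\,d\boldsymbol{x}\le 0$ (using the positivity $c\ge 0$ established in Section 2), the right-hand side reduces to the cubic term $\frac{\gamma\chi}{2}\int_\Omega\rho^3\,d\boldsymbol{x}$ plus, when $\tau>0$, the coupling term $-\frac{\tau\chi}{2}\int_\Omega\rho^2\,\partial_t c\,d\boldsymbol{x}$.

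Both nonlinear terms I would handle by 2D Gagliardo--Nirenberg interpolation. For the cubic term I would use $\|\rho\|_{L^3}^3\le C M\,\|\nabla\rho\|_{L^2}^2+CM^3$, where $M=\|\rho\|_{L^1}$ is the conserved mass; under the small-mass hypothesis already imposed in Lemma \ref{estintegral}, the resulting multiple of $\|\nabla\rho\|_{L^2}^2$ can be absorbed into the dissipation term on the left. For the $\tau$-term I would bound $|\int_\Omega\rho^2\,\partial_t c\,d\boldsymbol{x}|\le\|\rho\|_{L^4}^2\,\|\partial_t c\|_{L^2}$, apply the interpolation $\|\rho\|_{L^4}^2\le C\|\nabla\rho\|_{L^2}\|\rho\|_{L^2}+C\|\rho\|_{L^2}^2$, and use Young's inequality to peel off a further small multiple of $\|\nabla\rho\|_{L^2}^2$. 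After these absorptions one is left with a differential inequality of the form $\frac{dy}{dt}\le g\,y+h$, where $h$ is a constant (a multiple of $M^3$) and $g$ is a nonnegative function built from $\|\partial_t c\|_{L^2}^2$ and $\|\partial_t c\|_{L^2}$.

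It then remains to verify the hypotheses of Lemma \ref{ug} on unit time intervals. Lemma \ref{estintegral} provides $\int_t^{t+1}y\,ds\le k_1$; the energy dissipation law \eqref{2.6} controls $\int_0^{\infty}\|\partial_t c\|_{L^2}^2\,ds$, whence $\int_t^{t+1}g\,ds\le k_2$ (the linear-in-$\partial_t c$ piece being bounded by Cauchy--Schwarz); and $h$ is constant, so $\int_t^{t+1}h\,ds\le k_3$. The uniform Gronwall lemma then yields $y(t)\le(k_1+k_3)\exp(k_2)$ for all $t\ge 1$, uniformly in $t$, while a short-time bound on $[0,1]$ follows from the classical Gronwall lemma using $\rho_0\in L^2(\Omega)$. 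Combining the two regimes gives $\rho\in L^{\infty}(0,+\infty;L^2(\Omega))$.

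I expect the coupling term $\int_\Omega\rho^2\,\partial_t c\,d\boldsymbol{x}$ in the parabolic--parabolic case $\tau>0$ to be the main obstacle: it forces the $L^4$ interpolation and a careful Young splitting so that, after absorbing the gradient, the coefficient $g$ involves only quantities that are integrable over unit intervals thanks to the energy dissipation. The small-mass condition of Lemma \ref{estintegral} is essential for the cubic term's gradient contribution to be absorbable; without it the coefficient of $\|\nabla\rho\|_{L^2}^2$ could turn negative and the estimate would break down, consistent with the possibility of blow-up above the 2D mass threshold.
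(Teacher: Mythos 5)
Your proposal is correct in outline and shares the paper's skeleton: test \eqref{2.1} with $\rho$, convert the transport term to $-\tfrac{\chi}{2}\int_\Omega\rho^2\Delta c$, substitute \eqref{2.2}, drop the $-\tfrac{\alpha\chi}{2}\int_\Omega\rho^2 c\le 0$ piece by positivity of $c$, interpolate, and close with the uniform Gronwall lemma using the integrability from Lemma \ref{estintegral} and the energy dissipation \eqref{2.6}. The genuine divergence is in the cubic term. The paper estimates $\tfrac{\chi\gamma}{2}(\rho^2,\rho)\le \tfrac{\chi\gamma}{2}\|\rho^2\|_{L^2}\|\rho\|_{L^2}$, applies Gagliardo--Nirenberg with the \emph{$L^2$ norm} as the low-order factor, and after a Young split with small $\varepsilon$ absorbs only an $\varepsilon\|\nabla\rho\|_{L^2}^2$ piece, pushing the resulting $\|\rho\|_{L^2}^4$ term into the Gronwall coefficient $g\sim 1+\|\rho\|_{L^2}^2+\tau\|\partial_t c\|_{L^2}^2+\tau\|\partial_t c\|_{L^2}$, which is integrable over unit intervals precisely by Lemma \ref{estintegral}; no smallness of the mass is invoked at this step. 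You instead use the $L^1$-based interpolation $\|\rho\|_{L^3}^3\le C'M\|\nabla\rho\|_{L^2}^2+C'M^3$ and absorb the full gradient contribution into the dissipation via small mass, which leaves a constant $h\sim M^3$ and a coefficient $g$ depending only on $\partial_t c$. Both routes close, and yours is arguably cleaner in that $g$ no longer contains $\|\rho\|_{L^2}^2$; the price is a caveat you should make explicit: the absorption requires $\tfrac{\gamma\chi}{2}C'M<1$ for the constant $C'$ of \emph{your} Gagliardo--Nirenberg inequality, and the hypothesis inherited from Lemma \ref{estintegral}, namely $(2+\tau)\gamma\chi C_{gn}M<1$ with the paper's constant $C_{gn}$ (defined through $\sqrt{\rho}$), does not automatically imply it. So as written your argument proves the lemma under a possibly more restrictive smallness condition on $M$, whereas the paper's version needs only the conclusion of Lemma \ref{estintegral}. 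Everything else --- the $L^4$ interpolation and Young splitting for the $\tau$-coupling term, the verification of the unit-interval bounds for $y$, $g$, $h$, and the separate classical Gronwall argument on $[0,1]$ --- matches the paper's proof.
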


\begin{proof}
Taking the scalar product of \eqref{2.1} with $\rho$ leads to
\begin{align}\label{2.31}
\left( \frac{\partial \rho}{\partial t}, \rho \right)
-\left( \Delta \rho, \rho \right) = - \chi \left( \nabla \cdot \left(\rho \nabla c \right). \rho \right),
\end{align}
By the Green formula, the equation \eqref{2.2} about $c$ and the positivity of $c$, we have
\begin{align}\label{2.32}
\left( \frac{\partial \rho}{\partial t}, \rho \right)
= \frac{1}{2}\frac{d}{dt} \| \rho \|^2_{L^2},
\ \ \
-\left( \Delta \rho, \rho \right) = \| \nabla\rho \|^2_{L^2},
\end{align}
and

\begin{align}\label{2.34}
- \chi \left( \nabla \cdot \left(\rho \nabla c \right) , \rho \right)
&= \chi \left( \rho \nabla c , \nabla \rho \right)
= \frac{\chi}{2} \left( \rho^2,  - \Delta c \right) \nonumber\\
&= \frac{\chi}{2} \left( \rho^2, \gamma \rho - \tau\frac{\partial c}{\partial t} - \alpha c \right) \nonumber\\
&\leq \frac{\chi \gamma}{2} \left( \rho^2, \rho \right)
- \frac{\tau\chi}{2} \left( \rho^2, \frac{\partial c}{\partial t} \right).
\end{align}

For the two terms on the right-hand side of the inequality \eqref{2.34}, we apply Cauchy-Schwarz inequality, Gagliardo-Nirenberg inequality and Young's inequality in turn to derive
\begin{align}\label{2.35}
\frac{\chi \gamma}{2} \left( \rho^2, \rho \right)
&\leq \frac{\chi \gamma}{2} \|\rho^2\|_{L^2} \|\rho\|_{L^2}\nonumber\\
&\leq \frac{\chi \gamma C_{gn}}{2} \|\rho\|^2_{L^2} (\|\nabla\rho\|_{L^2}+\|\rho\|_{L^2})\nonumber\\
&\leq \frac{\chi \gamma C_{gn}}{2}
\left(\frac 1 \varepsilon  \|\rho\|^4_{L^2}
+ \varepsilon \|\nabla\rho\|^2_{L^2}
+ \frac{1}{2}\|\rho\|^4_{L^2}
+ \frac{1}{2} \|\rho\|^2_{L^2} \right)\nonumber\\
&= \frac{\chi \gamma C_{gn}}{2} \left(\frac{1}{2}+ (\frac 1 \varepsilon +\frac 1 2 )\|\rho\|^2_{L^2}\right)\|\rho\|^2_{L^2}
+ \frac{\chi \gamma C_{gn}}{2} \varepsilon  \|\nabla\rho\|^2_{L^2},
\end{align}
and

\begin{align}\label{2.35add}
-\frac{\tau\chi}{2} \left( \rho^2, \frac{\partial c}{\partial t} \right)
&\leq | -\frac{\tau\chi}{2} \left( \rho^2, \frac{\partial c}{\partial t} \right) |
\leq \frac{\tau\chi}{2}  \left\|\frac{\partial c}{\partial t} \right\|_{L^2} \left\| \rho^2 \right\|_{L^2} \nonumber\\
&\leq \frac{\tau\chi C_{gn}}{2}  \left\|\frac{\partial c}{\partial t} \right\|_{L^2}
\left( \| \nabla \rho\|_{L^2} \| \rho\|_{L^2} + \| \rho\|_{L^2}^2 \right) \nonumber\\
&\leq \frac{\tau\chi C_{gn}}{2}
\left( \frac 1 \varepsilon \left\|\frac{\partial c}{\partial t} \right\|_{L^2}^2 \|\rho\|_{L^2}^2
+ \varepsilon  \| \nabla \rho\|_{L^2}^2
+ \left\|\frac{\partial c}{\partial t} \right\|_{L^2}\| \rho\|_{L^2}^2
\right) \nonumber\\
&= \frac{\tau\chi C_{gn}}{2} \left( \frac 1 \varepsilon \left\|\frac{\partial c}{\partial t} \right\|_{L^2}^2
+\left\|\frac{\partial c}{\partial t} \right\|_{L^2} \right) \|\rho\|_{L^2}^2
+ \frac{\tau\chi C_{gn}}{2} \varepsilon \| \nabla \rho\|_{L^2}^2.
\end{align}

We substituting \eqref{2.32}-\eqref{2.35add} into \eqref{2.31} to obtain
\begin{align}\label{2.36}
&\frac{1}{2}\frac{d}{dt} \| \rho \|^2_{L^2}
+ \| \nabla\rho \|^2_{L^2} \nonumber\\
\leq & \frac{\chi C_{gn}}{2}
\left(\gamma+2\gamma\|\rho\|^2_{L^2} + \tau \left\|\frac{\partial c}{\partial t} \right\|_{L^2}^2 +\tau\left\|\frac{\partial c}{\partial t} \right\|_{L^2} \right)
\|\rho\|^2_{L^2}
+ \frac{\chi C_{gn} \varepsilon  }{2} \left( \gamma+\tau \right)
\|\nabla\rho\|^2_{L^2}.
\end{align}
As long as $\varepsilon$ is small enough so that $\frac{\chi C_{gn} }{2}(\gamma+\tau)\varepsilon\leq 1$, we have
\begin{align}\label{2.37}
\frac{d}{dt} \| \rho \|^2_{L^2}
\leq   C  \left(1 + \|\rho\|^2_{L^2} + \tau \left\|\frac{\partial c}{\partial t} \right\|_{L^2}^2 +\tau \left\|\frac{\partial c}{\partial t} \right\|_{L^2} \right) \|\rho\|^2_{L^2}
\end{align}
by integrating $\frac 1 \varepsilon$ into the constant $C$.
Due to the estimate \eqref{2.28} and energy dissipation \eqref{2.6}, we have that
$$
\int_t^{t+1} \| \rho \|^2_{L^2}\leq E_{tot}(\rho_0,c_0)+ (2+\tau) \chi \gamma C_{gn} M^2=K_0.
$$
Besides,
$$
\int_t^{t+1} \left(1 + \|\rho\|^2_{L^2} + \tau \left\|\frac{\partial c}{\partial t} \right\|_{L^2}^2 +\tau \left\|\frac{\partial c}{\partial t} \right\|_{L^2} \right)
\leq C\left(1+ E_{tot}(\rho_0,c_0)+ (2+\tau) \chi \gamma C_{gn} M^2 \right)
=C(1+K_0).
$$
For $t\geq 1$, applying the uniform Gronwall Lemma \ref{ug} leads to a bound on $\| \rho(t) \|^2_{L^2}$ that reads $K_0\exp(C(1+K_0))$.
For $t< 1$, we apply the classical Gronwall lemma to obtain
$\| \rho \|^2_{L^2} \leq \tilde K_0 < +\infty$, where
$$
\tilde K_0:=\|\rho_0 \|^2_{L^2} \exp \left( \int_0^1   C  \left(1 + \|\rho\|^2_{L^2} + \tau \left\|\frac{\partial c}{\partial t} \right\|_{L^2}^2 +\tau \left\|\frac{\partial c}{\partial t} \right\|_{L^2} \right) dt \right)
=\|\rho_0 \|^2_{L^2} \exp\left(C(1+K_0) \right),
$$
is a constant followed by Lemma \ref{estintegral}.
As a result, we have
\begin{align}\label{2.38}
\| \rho(t) \|^2_{L^2} \leq K_1 :=\max \{K_0, \|\rho_0 \|^2_{L^2} \} \exp\left(C(1+K_0)\right).
\end{align}
\end{proof}

\begin{lemma}\label{estLp}
For any $1<p\leq +\infty$ and $T\in (0,T_{\text{max}})$, if initial data $\rho_0 \in L^p(\Omega)$ and Lemma \ref{estL2} holds, then $\rho(\boldsymbol{x},t) \in L^{\infty}\left( 0,T; L^p(\Omega) \right)$ for the general KS model \eqref{2.1}-\eqref{1.4}.
\end{lemma}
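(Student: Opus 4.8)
The plan is to derive a differential inequality for $y(t):=\|\rho(\cdot,t)\|_{L^p}^p$ and close it by a Gronwall argument, combining the $L^2$ bound of Lemma \ref{estL2} with the energy dissipation \eqref{2.6}. Assuming first the strict positivity $\rho>0$ (established above when $\rho_0>0$; the general case $\rho_0\ge 0$ follows from the usual $\rho+\delta$ regularization and passage to the limit), I would test \eqref{2.1} with $p\rho^{p-1}$. Integration by parts turns the diffusion into the dissipation $\frac{4(p-1)}{p}\|\nabla\rho^{p/2}\|_{L^2}^2$, while the chemotaxis term becomes $-\chi(p-1)\int_\Omega\rho^p\Delta c\,d\boldsymbol{x}$. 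Substituting $\Delta c=\tau\partial_t c+\alpha c-\gamma\rho$ from \eqref{2.2} and discarding the sign-favorable term $-\chi(p-1)\alpha\int_\Omega\rho^p c\le 0$ (valid since $\rho,c\ge 0$), I obtain
\[
\frac{dy}{dt}+\frac{4(p-1)}{p}\|\nabla\rho^{p/2}\|_{L^2}^2\le \chi(p-1)\gamma\int_\Omega\rho^{p+1}\,d\boldsymbol{x}-\chi(p-1)\tau\int_\Omega\rho^p\,\partial_t c\,d\boldsymbol{x}.
\]

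The crux is to bound the supercritical-looking term $\int_\Omega\rho^{p+1}$ without producing a superlinear right-hand side that could blow up before $T_{\max}$. Writing $u=\rho^{p/2}$, one has $\int_\Omega\rho^{p+1}=\|u\|_{L^q}^q$ with $q=2(p+1)/p<4$. The key idea is to interpolate not against $\|u\|_{L^2}$ (which merely reproduces the $L^p$ norm) but against a previously controlled norm: I would apply the two-dimensional Gagliardo--Nirenberg inequality \eqref{GN} in the form $\|u\|_{L^q}\le C\|\nabla u\|_{L^2}^{\theta}\|u\|_{L^m}^{1-\theta}+C\|u\|_{L^m}$ with $m=2p_0/p$, where $\rho\in L^\infty(0,T;L^{p_0})$ is already known (starting from $p_0=2$ via Lemma \ref{estL2}). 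A short computation gives $q\theta=q-m=2(p+1-p_0)/p<2$, so Young's inequality absorbs $C\|\nabla u\|_{L^2}^{q-m}\|u\|_{L^m}^{m}$ into a small multiple of the dissipation plus a term $C\|u\|_{L^m}^{2p_0/(p_0-1)}=C\|\rho\|_{L^{p_0}}^{pp_0/(p_0-1)}$, which is a finite constant. The admissibility $m\ge 1$ requires $p\le 2p_0$; hence a finite induction, doubling $p_0$ at each step, reaches every finite $p$.

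It remains to treat the coupling term $-\chi(p-1)\tau\int_\Omega\rho^p\partial_t c$. Bounding it by $\|\partial_t c\|_{L^2}\|\rho^p\|_{L^2}=\|\partial_t c\|_{L^2}\|u\|_{L^4}^2$, applying \eqref{GN} to $u$ in $L^4$, and then using Young's inequality, yields a further small multiple of $\|\nabla u\|_{L^2}^2$ plus $C(\|\partial_t c\|_{L^2}^2+\|\partial_t c\|_{L^2})\,y$. After absorbing the gradient contributions into the dissipation, the differential inequality takes the linear form $\frac{dy}{dt}\le g(t)\,y+h$ with $h$ constant and $g(t)=C(\|\partial_t c\|_{L^2}^2+\|\partial_t c\|_{L^2})$. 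Since the energy dissipation \eqref{2.6} gives $\partial_t c\in L^2(0,T;L^2)$, we have $\int_0^T g<\infty$, and the classical Gronwall lemma bounds $y$ on $[0,T]$, i.e. $\rho\in L^\infty(0,T;L^p)$; when $\tau=0$ the coupling term is absent and the bound is immediate. Finally, the endpoint $p=\infty$ follows either by a Moser iteration tracking the $p$-dependence of the constants as $p\to\infty$, or by a bootstrap: once $\rho\in L^\infty(0,T;L^{p})$ for some $p>2$, elliptic/parabolic regularity for \eqref{2.2} gives $\nabla c\in L^\infty(0,T;L^\infty)$, and the maximum principle applied to the resulting linear parabolic equation for $\rho$ delivers the $L^\infty$ bound.

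The main obstacle is precisely the control of $\int_\Omega\rho^{p+1}$: a naive estimate closes only into a superlinear ODE whose solution may blow up in finite time, so the argument hinges on feeding the previously established lower-order bound into the Gagliardo--Nirenberg interpolation to render that term subcritical, leaving only the benign linear Gronwall factor coming from $\partial_t c$.
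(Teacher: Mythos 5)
Your proposal is correct and follows the same overall strategy as the paper: test the $\rho$-equation with a power of $\rho$, integrate by parts to produce the dissipation $\|\nabla\rho^{p/2}\|_{L^2}^2$, substitute $\Delta c$ from \eqref{2.2} and drop the sign-favourable $-\alpha c$ term, control the resulting $\int_\Omega\rho^{p+1}$ and $\int_\Omega\rho^{p}\partial_t c$ terms by Gagliardo--Nirenberg and Young so that only a small multiple of the dissipation and a linear Gronwall factor survive, and close using $\partial_t c\in L^2(0,T;L^2)$ from the energy dissipation \eqref{2.6}. The one genuine difference is how the critical term $\int_\Omega\rho^{p+1}$ is linearized. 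The paper first applies Cauchy--Schwarz, $\int_\Omega\rho^{p+1}\le\|\rho\|_{L^2}\|\rho^{p}\|_{L^2}$, pulls out the $L^\infty_tL^2_x$ bound $K_1$ from Lemma \ref{estL2}, and then applies \eqref{GN} to $\|\rho^{p/2}\|_{L^4}^2$; this yields a term linear in $\|\rho^{p/2}\|_{L^2}^2$ in a single step, valid for every finite $p$ with no induction. You instead interpolate $\|u\|_{L^q}$ ($u=\rho^{p/2}$, $q=2(p+1)/p$) against $\|u\|_{L^m}$ with $m$ tied to a previously established $L^{p_0}$ bound, which forces the constraint $m\ge1$, i.e.\ $p\le 2p_0$, and hence the doubling induction on $p_0$; this works but is more laborious, and your diagnosis that naive interpolation against $\|u\|_{L^2}$ produces a superlinear ODE is exactly the trap the paper's Cauchy--Schwarz step is designed to avoid. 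On the endpoint $p=\infty$ you are in fact more careful than the paper: the paper simply lets $p\to\infty$ in $\|\rho\|_{L^{2p}}$, which requires the constants to stay under control as $p$ grows, whereas your proposed Moser iteration or bootstrap through $\nabla c\in L^\infty$ is the honest way to justify that limit (though for the bootstrap note that the zeroth-order coefficient $\Delta c$ is only in $L^p$, so you need the Ladyzhenskaya-type $L^\infty$ estimate for parabolic equations with $L^p$ potentials rather than a bare maximum principle).
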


\begin{proof}
Taking the scalar product of \eqref{2.1} with $\rho^{2p-1}$ leads to
\begin{align}\label{2.41}
\left( \frac{\partial \rho}{\partial t}, \rho^{2p-1} \right)
-\left( \Delta \rho, \rho^{2p-1} \right)
= - \chi \left( \nabla \cdot \left(\rho \nabla c \right) , \rho^{2p-1} \right),
\end{align}
where
\begin{align}\label{2.42}
\left( \frac{\partial \rho}{\partial t}, \rho^{2p-1} \right)
= \frac{1}{2p}\frac{d}{dt} \left\| \rho^{p} \right\|^2_{L^2}.
\end{align}
\begin{align}\label{2.43}
- \left( \Delta \rho, \rho^{2p-1} \right) = \left( \nabla \rho, \nabla\rho^{2p-1} \right)
= (2p-1) \| \rho^{p-1}\nabla \rho\|^2_{L^2}
= \frac{2p-1}{p^2} \left\| \nabla (\rho^{p}) \right\|^2_{L^2}.
\end{align}
Besides, proceeding as above
\begin{align}\label{2.44}
- \chi \left( \nabla \cdot \left(\rho \nabla c \right) , \rho^{2p-1} \right)
= \chi \left( \rho \nabla c , \nabla \rho^{2p-1} \right)
= \frac{(2p-1)\chi}{2p} \left(\nabla (\rho^{2p}), \nabla c \right)
= \frac{(2p-1)\chi}{2p} \left( \rho^{2p},  - \Delta c \right).
\end{align}

By the equation \eqref{2.2} about $c$ and the positivity of $\rho$ and $c$,
\begin{align}\label{2.45}
\left( \rho^{2p},  - \Delta c \right)
=\left( \rho^{2p},  \gamma \rho - \tau\frac{\partial c}{\partial t} - \alpha c  \right)
\leq \gamma \left( \rho^{2p}, \rho \right)
- \tau \left( \rho^{2p}, \frac{\partial c}{\partial t} \right).
\end{align}
Applying Cauchy-Schwarz inequality, Gagliardo-Nirenberg inequality and Young's inequality in turn, we derive that
for some $\varepsilon>0$ we have
\begin{align}\label{2.45-1}
\gamma \left( \rho^{2p}, \rho \right)
&\leq \gamma \|\rho\|_{L^2} \left\|\rho^{2p} \right\|_{L^2} \nonumber\\
&\leq \gamma \|\rho\|_{L^2} C_{gn} \|\rho^p\|_{L^2} \left( \|\nabla(\rho^p)\|_{L^2}+ \|\rho^p\|_{L^2} \right) \nonumber\\
&\leq  \frac{\gamma C_{gn}}{\varepsilon} \|\rho\|_{L^2}^2 \|\rho^p\|_{L^2}^2
+ \gamma C_{gn} \varepsilon \|\nabla(\rho^p)\|_{L^2}^2
+\gamma C_{gn} \|\rho\|_{L^2} \|\rho^p\|_{L^2}^2 \nonumber\\
&\leq \gamma C_{gn} \left( \frac{K_1}{\varepsilon} + K_1^{\frac{1}{2}} \right)\|\rho^p\|_{L^2}^2
+ \gamma C_{gn} \varepsilon \|\nabla(\rho^p)\|_{L^2}^2,
\end{align}
where $\|\rho\|^2_{L^2} \leq K_1$ is known in Lemma 3.3. Similarly,
\begin{align}\label{2.45-2}
-\tau \left( \rho^{2p}, \frac{\partial c}{\partial t} \right)
&\leq \left|-\tau \left( \rho^{2p}, \frac{\partial c}{\partial t} \right) \right|
\leq\tau \left\|\frac{\partial c}{\partial t} \right\|_{L^2} \left\|\rho^{2p} \right\|_{L^2} \nonumber\\
&\leq \tau \left\|\frac{\partial c}{\partial t} \right\|_{L^2} C_{gn} \left\|\rho^p \right\|_{L^2} \left( \|\nabla(\rho^p)\|_{L^2}+ \|\rho^p\|_{L^2} \right) \nonumber\\
&\leq \tau C_{gn} \left( \frac{1}{\varepsilon} \left\|\frac{\partial c}{\partial t} \right\|_{L^2}^2 + \left\|\frac{\partial c}{\partial t} \right\|_{L^2} \right)\|\rho^p\|_{L^2}^2
+ \tau C_{gn}  \varepsilon \|\nabla(\rho^p)\|_{L^2}^2.
\end{align}

Substituting \eqref{2.42}-\eqref{2.45-2} into \eqref{2.41}, we obtain
\begin{align}\label{2.46}
&\frac{1}{2p}\frac{d}{dt} \| \rho^{p} \|^2_{L^2}
+ \frac{2p-1}{p^2} \| \nabla (\rho^{p})\|^2_{L^2} \nonumber\\
\leq &\frac{(2p-1)\chi C_{gn}}{2p} \left( \gamma K_1 + \gamma K_1^{\frac{1}{2}}
+\tau \left\|\frac{\partial c}{\partial t} \right\|_{L^2}^2 + \tau \left\|\frac{\partial c}{\partial t} \right\|_{L^2} \right)
\|\rho^p\|^2_{L^2}  \nonumber\\
+& \frac{(2p-1)\chi C_{gn}}{2p} (\gamma+ \tau) \varepsilon
\|\nabla(\rho^p)\|^2_{L^2}.
\end{align}
As long as $\varepsilon$ is small enough so that $\frac{\chi p C_{gn}}{2} (\gamma + \tau) \varepsilon\leq 1$, we have
\begin{align}\label{2.47}
\frac{d}{dt} \| \rho^{p} \|^2_{L^2} = \frac{d}{dt} \| \rho \|^{2p}_{L^{2p}}
\leq C p \left( \frac{K_1}{\varepsilon}   + K_1^{\frac{1}{2}}
+\tau \left\|\frac{\partial c}{\partial t} \right\|_{L^2}^2 + \frac{\tau}{\varepsilon} \left\|\frac{\partial c}{\partial t} \right\|_{L^2} \right) \| \rho \|^{2p}_{L^{2p}}.
\end{align}
According to the Gronwall lemma and Lemma 3.2, we have
\begin{align}\label{2.48}
\| \rho \|^{2p}_{L^{2p}} \leq K_2 < +\infty,\ \ \forall 1<p<+\infty,
\end{align}
where
\begin{align}\label{2.49}
K_2:=\| \rho_0 \|^{2p}_{L^{2p}} \exp \left(  C  p ( K_1 T + K_0) \right),
\end{align}
with $\frac 1 \varepsilon$ being absorbed by the constant $C$.

We can then obtain the $L^\infty$-norm estimate by
$\| \rho \|_{L^{2p}}\rightarrow \|\rho\|_{L^\infty}$ when $p\rightarrow +\infty$.

\end{proof}

\section{Properties of the semi-discrete scheme}
Since it is already shown in  \cite{Saito2005} that the semi-implicit Euler scheme \eqref{S1}-\eqref{S2} has unique solutions if $\delta t$ is sufficiently small,
 we show below that the  scheme  \eqref{S1}-\eqref{S2} satisfies the mass conservation, positivity preserving, and  energy dissipation.

\begin{theorem}
Let $N_t \in \mathbb{N}$,  $N_t \geq 1$, $\delta t = T/N_t$ and $t^{n+1}=(n+1) \delta t \leq T$ for $n\in \{0,1,2,\cdots,N_t-1\}$,
the semi-discrete scheme \eqref{S1}-\eqref{S2} satisfies the following properties:
\begin{enumerate}
  \item Mass conserving: $\int_{\Omega} \rho^{n+1} d\boldsymbol{x} = \int_{\Omega} \rho^{n} d\boldsymbol{x}$.

  \item Positivity preserving:
  if $\rho^{n},c^n \geq 0$, then $\rho^{n+1}\geq 0$ and $c^{n+1}\geq 0$;
  if moreover $\rho^{n}> 0$, then $\rho^{n+1} > 0$ and $c^{n+1} > 0$.

  \item Energy dissipation:
  \begin{align}\label{e03}
  &E_{tot}(\rho^{n+1},c^{n+1})-E_{tot}(\rho^{n},c^{n}) \leq 0,
  \end{align}
  where
  $E_{tot}(\rho^{n},c^{n})
  =\int_{\Omega}\left( f(\rho^{n})
  - \chi \rho^{n} c^{n}
  + \frac{\chi}{2\gamma}|\nabla c^{n}|^{2}
  + \frac{\alpha \chi}{2\gamma} (c^{n})^{2} \right) d\boldsymbol{x}$.
\end{enumerate}
\end{theorem}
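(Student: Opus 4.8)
The plan is to establish the three properties in the order stated, reusing at the discrete level the same structural identities that were exploited in the proof of Theorem 2.1. \textbf{Mass conservation} is immediate: integrating \eqref{S1} over $\Omega$ makes both $\int_\Omega \Delta\rho^{n+1}$ and $\int_\Omega \nabla\cdot(\rho^{n+1}\nabla c^n)$ vanish under periodic or Neumann boundary conditions, leaving $\int_\Omega(\rho^{n+1}-\rho^n)\,d\boldsymbol{x}=0$.

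For \textbf{positivity}, I would first recast the density step in the Liu et al. divergence form (now for general $\chi$), i.e. $\frac{\rho^{n+1}-\rho^n}{\delta t}=\nabla\cdot\!\big(e^{\chi c^n}\nabla(\rho^{n+1}e^{-\chi c^n})\big)$, which is easily verified since $e^{\chi c^n}\nabla(\rho^{n+1}e^{-\chi c^n})=\nabla\rho^{n+1}-\chi\rho^{n+1}\nabla c^n$. Setting $u:=\rho^{n+1}e^{-\chi c^n}$, equation \eqref{S1} becomes a uniformly elliptic problem with right-hand side $\rho^n$. Testing against the negative part $u_-$ and integrating by parts turns the left-hand side into $-\int_\Omega e^{\chi c^n}\big(u_-^2+\delta t\,|\nabla u_-|^2\big)\le 0$, while the right-hand side $\int_\Omega \rho^n u_-\ge 0$ because $\rho^n\ge 0$; hence $u_-\equiv 0$ and $\rho^{n+1}=u\,e^{\chi c^n}\ge 0$, with strict positivity when $\rho^n>0$ by the strong maximum principle. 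For $c^{n+1}$ I would rewrite \eqref{S2} as $(\tau+\alpha\,\delta t)\,c^{n+1}-\delta t\,\Delta c^{n+1}=\tau c^n+\gamma\,\delta t\,\rho^{n+1}$; the operator on the left is a positive elliptic operator and the right-hand side is nonnegative (strictly positive once $\rho^{n+1}>0$), so the maximum principle yields $c^{n+1}\ge 0$ (resp. $>0$), covering uniformly both $\tau=0$ and $\tau>0$.

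The heart of the matter is \textbf{energy dissipation}, where the difficulty is the semi-implicit coupling — $c^n$ rather than $c^{n+1}$ appears in \eqref{S1}. The plan is: (i) use convexity of $f(\rho)=\rho\log\rho-\rho$ to bound $\int_\Omega[f(\rho^{n+1})-f(\rho^n)]\le\int_\Omega \log\rho^{n+1}\,(\rho^{n+1}-\rho^n)$; (ii) split $\log\rho^{n+1}=w+\chi c^n$ with $w:=\log\rho^{n+1}-\chi c^n$, and use \eqref{S1} written as $\rho^{n+1}-\rho^n=\delta t\,\nabla\cdot(\rho^{n+1}\nabla w)$ to obtain, after integration by parts, $\int_\Omega w\,(\rho^{n+1}-\rho^n)=-\delta t\int_\Omega \rho^{n+1}|\nabla w|^2\le 0$; (iii) observe that the leftover cross term $\chi\int_\Omega c^n(\rho^{n+1}-\rho^n)$ combines exactly with the $-\chi\int_\Omega(\rho^{n+1}c^{n+1}-\rho^n c^n)$ contribution from the energy difference to give precisely $-\chi\int_\Omega \rho^{n+1}(c^{n+1}-c^n)$, the $c^n\rho^n$ pieces telescoping away.

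The remaining quadratic-in-$c$ terms I would treat with the identity $|a|^2-|b|^2=2a\cdot(a-b)-|a-b|^2$ applied to $(\nabla c^{n+1},\nabla c^n)$ and to $(c^{n+1},c^n)$, producing the negative remainders $-\frac{\chi}{2\gamma}\|\nabla(c^{n+1}-c^n)\|_{L^2}^2-\frac{\alpha\chi}{2\gamma}\|c^{n+1}-c^n\|_{L^2}^2$ together with a term linear in $(c^{n+1}-c^n)$. Collecting this linear piece with $-\chi\int_\Omega \rho^{n+1}(c^{n+1}-c^n)$ and integrating the gradient contribution by parts yields $\frac{\chi}{\gamma}\int_\Omega(c^{n+1}-c^n)\big(-\Delta c^{n+1}+\alpha c^{n+1}-\gamma\rho^{n+1}\big)$, which by \eqref{S2} equals $-\frac{\tau\chi}{\gamma\,\delta t}\|c^{n+1}-c^n\|_{L^2}^2\le 0$. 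Summing everything then gives $E_{tot}(\rho^{n+1},c^{n+1})-E_{tot}(\rho^n,c^n)\le -\delta t\int_\Omega \rho^{n+1}|\nabla w|^2-\frac{\tau\chi}{\gamma\,\delta t}\|c^{n+1}-c^n\|_{L^2}^2-\frac{\chi}{2\gamma}\|\nabla(c^{n+1}-c^n)\|_{L^2}^2-\frac{\alpha\chi}{2\gamma}\|c^{n+1}-c^n\|_{L^2}^2\le 0$, where $\rho^{n+1}>0$ from the positivity step guarantees the first term has the right sign. I expect the main obstacle to be exactly steps (ii)–(iii): the splitting $\log\rho^{n+1}=w+\chi c^n$ is what absorbs the semi-implicit mismatch, and checking that the cross terms telescope cleanly is the delicate point; once positivity is secured, the rest is careful bookkeeping with integration by parts and the $c$-equation.
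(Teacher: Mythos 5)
Your proposal is correct. The mass-conservation argument and the energy-dissipation argument are essentially the ones in the paper: the paper likewise bounds $\int_\Omega\bigl(f(\rho^{n+1})-f(\rho^n)\bigr)$ by convexity, tests the rewritten density equation $\frac{\rho^{n+1}-\rho^n}{\delta t}=\nabla\cdot\bigl(\rho^{n+1}\nabla(\log\rho^{n+1}-\chi c^n)\bigr)$ against $\delta t(\log\rho^{n+1}-\chi c^n)$, and tests \eqref{S2} against $c^{n+1}-c^n$; your bookkeeping of the cross terms and the final list of negative remainders matches \eqref{e8} exactly. Where you genuinely diverge is the positivity of $\rho^{n+1}$. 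The paper multiplies \eqref{S1} by $\mathrm{sgn}\,\rho^{n+1}$, invokes Kato's inequality to get $|\rho^{n+1}|\le|\rho^n|+\delta t\,\Delta|\rho^{n+1}|-\delta t\,\chi\nabla\cdot(|\rho^{n+1}|\nabla c^n)$ in the distributional sense, integrates, and subtracts the mass identity to conclude $\int_\Omega\rho^{n+1}_-\le\int_\Omega\rho^n_-=0$; strict positivity is then obtained by a contradiction argument at an interior zero of $\rho^1$. You instead use the exponential-weight divergence form $\nabla\cdot\bigl(e^{\chi c^n}\nabla(\rho^{n+1}e^{-\chi c^n})\bigr)$ (the Liu et al.\ reformulation \eqref{1.7}, extended to general $\chi$), substitute $u=\rho^{n+1}e^{-\chi c^n}$, and test the resulting uniformly elliptic equation against $u_-$; the sign of the right-hand side $\int_\Omega\rho^n u_-$ forces $u_-\equiv 0$. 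This is a clean variational alternative: it avoids Kato's inequality and the somewhat delicate multiplication of the discrete equation by a sign function, does not need the mass identity as an input, and only requires $u\in H^1$ so that $u_-\in H^1$. The trade-off is that your route needs $c^n$ regular enough for the weight $e^{\chi c^n}$ to define a uniformly elliptic operator (and for the strong maximum principle you invoke for strict positivity), an assumption the paper also makes implicitly when it takes $c_0$ smooth. The treatment of $c^{n+1}\ge 0$ via the positive elliptic operator $(\tau+\alpha\,\delta t)I-\delta t\,\Delta$ is the same as the paper's resolvent argument.
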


\begin{proof}
Integrating the equation \eqref{S1} over $\Omega$, we deduce
\begin{equation}\label{3.1}
\int_{\Omega} \rho^{n+1} d\boldsymbol{x} = \int_{\Omega} \rho^{n} d\boldsymbol{x},
\end{equation}
with periodic or Neumann boundary condition for $\rho^{n+1}$.

Set $\rho^{n}_{+}:=\text{sup}\{\rho^{n},0 \}$, $\rho^{n}_{-}:=\text{sup}\{-\rho^{n},0 \}$, then $\rho^{n} = \rho^{n}_{+} - \rho^{n}_{-}$ and $|\rho^{n}| = \rho^{n}_{+} + \rho^{n}_{-}$.
Multiplying both sides of the equation \eqref{S1} by the sign function $\text{sgn} \rho^{n+1}$, we have:
\begin{equation}\label{3.2}
\frac{\rho^{n+1}\text{sgn} \rho^{n+1}-\rho^{n}\text{sgn} \rho^{n+1}}{\delta t} =
 \Delta \rho^{n+1} \text{sgn} \rho^{n+1}- \chi\nabla \cdot \left(\rho^{n+1} \nabla c^{n} \right)\text{sgn} \rho^{n+1}.
\end{equation}
By the property of symbolic function and the Kato's inequality,
we have
\begin{equation}\label{3.3}
|\rho^{n+1}| \leq |\rho^{n}| + \delta t \Delta |\rho^{n+1}| - \delta t\chi\nabla \cdot (|\rho^{n+1}| \nabla c^n).
\end{equation}
Integrating both sides of the inequality over the region $\Omega$, we obtain
\begin{equation}\label{3.4}
\int_{\Omega}|\rho^{n+1}| d\boldsymbol{x} = \int_{\Omega}\left( \rho^{n+1}_{+} + \rho^{n+1}_{-}\right) d\boldsymbol{x}
\leq
\int_{\Omega}|\rho^{n}| d\boldsymbol{x} = \int_{\Omega}\left( \rho^{n}_{+} + \rho^{n}_{-}\right) d\boldsymbol{x},
\end{equation}
where we used periodic or Neumann boundary condition for $\rho^{n+1}$.
Recall the mass conservation
\begin{equation}\label{3.4*}
\int_{\Omega} \rho^{n+1} d\boldsymbol{x} = \int_{\Omega}\left( \rho^{n+1}_{+} - \rho^{n+1}_{-}\right) d\boldsymbol{x}
= \int_{\Omega} \rho^{n} d\boldsymbol{x} = \int_{\Omega}\left( \rho^{n}_{+} - \rho^{n}_{-}\right) d\boldsymbol{x}.
\end{equation}
Subtracting the above from \eqref{3.4}, we derive
\begin{equation}\label{3.5}
\int_{\Omega} \rho^{n+1}_{-} d\boldsymbol{x} \leq \int_{\Omega} \rho^{n}_{-} d\boldsymbol{x}, \ \ \text{and}\ \ \rho^{n+1}_{-}=\rho^n_{-}=0,
\end{equation}
due to $\rho^{0} = \rho_{0} \geq 0$ and $\rho_{0}\not\equiv 0$.
Therefore $\rho^{n+1} \geq 0$.

To check that $\rho^{n+1}>0$ in $\Omega$ when $\rho_{0} > 0$ we proceed as follows.
Since $\rho_{0}$ belongs to $L^\infty$, then $\rho^1$ is smooth. Assume that $c_0$ is smooth.
If there exists a $x_0$ in $\Omega$ such that $\rho^1(x_0)=0$, then $\nabla \rho^1(x_0)=0$ and $\Delta \rho^1(x_0)\geq 0$.
Going back to \eqref{S1} then $\rho_0(x_0)\leq 0$, contradiction.
We conclude by induction on $n$.

According with the equation \eqref{S2} about $c^{n+1}$, we have in the elliptic case $\tau=0$,
\begin{equation}\label{3.6}
c^{n+1} = \gamma \left( -\Delta+\alpha I\right)^{-1} \rho^{n+1}.
\end{equation}
In the parabolic case $\tau>0$ the proof is similar as the above.
Therefore, the positivity of $c^{n+1}$ is consistent with that of $\rho^{n+1}$.

For the energy dissipation, we first rewrite \eqref{S1} as
\begin{align}\label{e4}
&\frac{\rho^{n+1}-\rho^{n}}{\delta t}= \nabla \cdot \left( \rho^{n+1} \nabla
\left(  \log\rho^{n+1} -  \chi c^{n} \right)
\right).
\end{align}
Taking the inner product of \eqref{e4} with $\delta t \left( \log\rho^{n+1} - \chi c^{n} \right)$, we get
\begin{align}\label{e5}
\int_{\Omega}\left( \rho^{n+1}-\rho^{n} \right)
\left( \log\rho^{n+1} - \chi c^{n} \right) d\boldsymbol{x}
= -\delta t \int_{\Omega} \rho^{n+1}
|\nabla\left( \log\rho^{n+1} - \chi c^{n} \right)|^2 d\boldsymbol{x}.
\end{align}
We note that $f(\rho^{n}) = \rho^{n} \log \rho^{n} -\rho^{n}$ is convex for $\rho^{n}>0$, then we can use $f'(\rho^{n})=\log\rho^{n}$ and derive
\begin{align}\label{e6}
\int_{\Omega}\left( \rho^{n+1}-\rho^{n} \right) f'(\rho^{n+1}) d\boldsymbol{x}
&=\int_{\Omega}\left( f(\rho^{n+1})- f(\rho^{n})+\frac{1}{2}(\rho^{n+1}-\rho^{n})^2  f''(\xi)  \right)d\boldsymbol{x} \nonumber\\
&\geq \int_{\Omega} \left( f(\rho^{n+1})- f(\rho^{n}) \right)d\boldsymbol{x}.
\end{align}
Taking the inner product of \eqref{S2} with $c^{n+1}-c^{n}$, we get
\begin{align}\label{e7}
&\frac{\tau}{\delta t} \|c^{n+1}-c^{n}\|^2_{L^2} + \frac{1}{2} \|\nabla c^{n+1}\|^2_{L^2}
- \frac{1}{2} \|\nabla c^{n}\|^2_{L^2} + \frac{1}{2} \|\nabla c^{n+1}-\nabla c^{n} \|^2_{L^2} \nonumber \\
+ &\frac{\alpha}{2} \|c^{n+1}\|^2_{L^2} - \frac{\alpha}{2} \|c^{n}\|^2_{L^2}  + \frac{\alpha}{2} \|c^{n+1}-c^{n}\|^2_{L^2} - \gamma (\rho^{n+1}, c^{n+1}-c^{n}) =0.
\end{align}
We sum up \eqref{e5}-\eqref{e7} and obtain the discrete energy dissipation
\begin{align}\label{e8}
&E_{tot}(\rho^{n+1},c^{n+1})-E_{tot}(\rho^{n},c^{n}) \nonumber \\
\leq &-\delta t \int_{\Omega} \rho^{n+1}
|\nabla\left( \log\rho^{n+1} - \chi c^{n} \right)|^2 d\boldsymbol{x}
- \frac{\chi}{2\gamma} \|\nabla c^{n+1}-\nabla c^{n} \|^2_{L^2}
\nonumber \\
& - \frac{\chi}{\gamma}(\frac{\tau}{\delta t}+ \frac{\alpha}{2})\|c^{n+1}-c^{n}\|^2_{L^2} \leq 0.
\end{align}

\end{proof}

\section{Bounds of the numerical solution}\label{boundsdiscrete}
To derive error estimates of the semi-implicit Euler scheme \eqref{S1}-\eqref{S2},
we need to estimate $L^p$-bounds $(1<p<\infty)$ of the semi-discrete numerical solution $\rho^{n+1}$ for $n\in \{0,1,2,\cdots,N_t-1\}$ in advance.

\begin{lemma}\label{destintegral}
If $(2+\tau) \gamma \chi C_{gn} M< 1$, then there exists $C>0$ such that $\delta t \sum\limits_{n=0}^{N_t-1} \left\|\rho^{n+1}\right\|^2_{L^2} \le C$ for the semi-discrete scheme \eqref{S1}-\eqref{S2}.
\end{lemma}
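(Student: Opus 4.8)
The plan is to transcribe the proof of Lemma \ref{estintegral} to the semi-discrete level, with the summed form of the discrete energy dissipation \eqref{e8} playing the role of the continuous law \eqref{2.6}. Write $D^{n+1}:=\int_\Omega \rho^{n+1}|\nabla(\log\rho^{n+1}-\chi c^n)|^2\,d\boldsymbol{x}$. Summing \eqref{e8} over $n=0,\dots,N_t-1$ telescopes the free energy and gives
$$\delta t\sum_{n=0}^{N_t-1} D^{n+1}+\frac{\chi\tau}{\gamma\,\delta t}\sum_{n=0}^{N_t-1}\|c^{n+1}-c^n\|^2_{L^2}\le E_{tot}(\rho^0,c^0)-E_{tot}(\rho^{N_t},c^{N_t}).$$
These two sums are the discrete counterparts of $\int\!\!\int\rho|\nabla(\log\rho-\chi c)|^2$ and $\frac{\tau\chi}{\gamma}\int\!\!\int(\partial_t c)^2$ that control the right-hand side of \eqref{2.28}. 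Since \eqref{e03} shows $E_{tot}$ is non-increasing, the right-hand side above is bounded independently of $\delta t$ provided $E_{tot}(\rho^{N_t},c^{N_t})$ admits a lower bound uniform in the discretization, which I expect the subcritical condition $(2+\tau)\gamma\chi C_{gn}M<1$ to supply.

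Next, mimicking \eqref{2.21}, I would expand $D^{n+1}=\int_\Omega\frac{|\nabla\rho^{n+1}|^2}{\rho^{n+1}}+\chi^2\int_\Omega\rho^{n+1}|\nabla c^n|^2+2\chi\int_\Omega\rho^{n+1}\Delta c^n$. The only structural difference from the continuous case is that the dissipation carries $c^n$, not $c^{n+1}$, so I substitute the chemical equation \eqref{S2} written at the \emph{previous} step, $\Delta c^n=\tau\frac{c^n-c^{n-1}}{\delta t}+\alpha c^n-\gamma\rho^n$. Using the positivity $c^n,\rho^{n+1}\ge0$ established above, the Cauchy--Schwarz split of \eqref{kmg1}, and $2\rho^{n+1}\rho^n\le (\rho^{n+1})^2+(\rho^n)^2$, and discarding the nonnegative terms $\chi^2\int\rho^{n+1}|\nabla c^n|^2$ and $2\alpha\chi\int\rho^{n+1}c^n$, this yields a lower bound of the form
$$D^{n+1}\ge \int_\Omega\frac{|\nabla\rho^{n+1}|^2}{\rho^{n+1}}\,d\boldsymbol{x}-\frac{\tau\chi}{\gamma}\Big\|\tfrac{c^n-c^{n-1}}{\delta t}\Big\|^2_{L^2}-(1+\tau)\gamma\chi\|\rho^{n+1}\|^2_{L^2}-\gamma\chi\|\rho^n\|^2_{L^2}.$$

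The crucial step is to convert the $\|\rho^{n+1}\|^2_{L^2}$ and $\|\rho^n\|^2_{L^2}$ terms back into gradients via the Gagliardo--Nirenberg inequality \eqref{GN}, which with conserved mass $M$ gives $\|\rho^{k}\|^2_{L^2}\le C_{gn}M\big(M+\int_\Omega\frac{|\nabla\rho^{k}|^2}{\rho^{k}}\big)$. Multiplying by $\delta t$ and summing, the gradient contributions arising from index $n+1$ and from index $n$ merge, up to endpoint terms, into a single sum $G:=\delta t\sum_k\int_\Omega\frac{|\nabla\rho^k|^2}{\rho^k}$ with net coefficient $1-(2+\tau)\gamma\chi C_{gn}M$, which is strictly positive \emph{exactly} because $(2+\tau)\gamma\chi C_{gn}M<1$ --- this is where the hypothesis is consumed, precisely as in \eqref{2.24}. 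Hence $G$ is controlled by $\delta t\sum D^{n+1}$, by the $c$-difference sum, and by $C_{gn}M^2T$, all bounded by the first paragraph. A final use of \eqref{GN} then gives $\delta t\sum_n\|\rho^{n+1}\|^2_{L^2}\le C_{gn}M^2T+C_{gn}M\,G\le C$, as claimed.

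I expect two main obstacles. First, the $c^n$-versus-$c^{n+1}$ mismatch: the backward substitution of \eqref{S2} is available only for $n\ge1$, so the $n=0$ term must be handled separately (its $\delta t$ weight renders it a harmless $O(\delta t)$ contribution once $\Delta c^0$ is controlled through the smoothness of $c_0$), and the index-shift bookkeeping between the $\rho^{n+1}$- and $\rho^n$-gradients at the endpoints needs to be tracked carefully. Second, and more seriously, the uniform lower bound for $E_{tot}(\rho^{N_t},c^{N_t})$: unlike the continuous proof, which merely invokes finiteness of the energy on $[0,T]$, the discrete argument requires a genuine coercivity estimate for the free energy in the subcritical-mass regime (of logarithmic Hardy--Littlewood--Sobolev / Moser--Trudinger type), and I would want to confirm that $(2+\tau)\gamma\chi C_{gn}M<1$ is sufficient to guarantee it.
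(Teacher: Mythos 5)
Your proposal is sound and reaches the stated bound, but it takes a genuinely different route from the paper at the key technical step. The paper first derives the discrete analogue of \eqref{2.28} with the dissipation functional written in terms of $c^{n+1}$ (see \eqref{3.20-1}), because mimicking the continuous computation requires substituting \eqref{S2} at level $n+1$, which produces $-2\chi\gamma\|\rho^{n+1}\|^2_{L^2}$ with matching indices; it then must reconcile this with the energy law \eqref{e8}, which controls the functional with $c^n$. That bridge is the splitting \eqref{3.20-2} together with an estimate of $\int_\Omega\rho^{n+1}|\nabla c^{n+1}-\nabla c^n|^2$ via Gagliardo--Nirenberg and the bound $\|\Delta c^{n+1}\|^2_{L^2}\le\gamma^2\sum_{k\le n+1}\|\rho^k\|^2_{L^2}$, which forces an \emph{additional} smallness assumption on $\chi\gamma$ and a discrete Gronwall argument on the partial sums $S_n=\delta t\sum_{k\le n}\|\rho^k\|^2_{L^2}$. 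You instead keep the functional with $c^n$ and substitute \eqref{S2} at the \emph{previous} level, accepting the cross term $\int_\Omega\rho^{n+1}\rho^n$ and redistributing it by Young's inequality between consecutive indices; after summation the coefficients recombine to exactly $(2+\tau)\gamma\chi C_{gn}M$, so no Gronwall lemma and no extra smallness of $\chi\gamma$ is needed. This is arguably cleaner, at the price of the endpoint bookkeeping you flag: the $n=0$ step (where $\Delta c^0$ must be controlled through the smoothness of $c_0$) and the uncompensated $k=0$ gradient term, which additionally requires $\nabla\sqrt{\rho_0}\in L^2$. Your second worry --- a $\delta t$-uniform lower bound for $E_{tot}(\rho^{N_t},c^{N_t})$ --- is legitimate but is not resolved by the paper either: in \eqref{hardelot} and in the definition of $K_3$ the quantity $E_{tot}(\rho_0,c_0)-E_{tot}(\rho^{N_t},c^{N_t})$ is simply asserted to be finite, so both arguments implicitly rely on coercivity of the free energy in the small-mass regime.
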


\begin{proof}
Similarly to the derivation in Lemma \ref{estintegral}, we have the discrete form
\begin{equation}\label{3.20-1}
\begin{aligned}
\left\| \rho^{n+1} \right\|^2_{L^2}
&\leq C \Big( \int_{\Omega} \rho^{n+1}  |\nabla \left( \log \rho^{n+1}-\chi  c^{n+1} \right)|^{2} d\boldsymbol{x}
+ \frac{\tau\chi}{\gamma}\left\|\frac{c^{n+1}-c^n}{\delta t}\right\|^2_{L^2}\\
&+ (2+\tau) \chi \gamma C_{gn} M^2 \Big),
\end{aligned}
\end{equation}
with the condition of $(2+\tau) \gamma \chi C_{gn} M< 1$.

Let us observe that the first term in the right hand side
of \eqref{3.20-1} is not exactly the same as in the energy dissipation \eqref{e8}.
We overcome this as follows
\begin{equation}\label{3.20-2}
\begin{aligned}
&\int_{\Omega} \rho^{n+1}  |\nabla \left( \log \rho^{n+1}-\chi c^{n+1}  \right)|^{2} d\boldsymbol{x}\\
\leq &2\int_{\Omega} \rho^{n+1}  |\nabla \left( \log \rho^{n+1}-\chi c^{n}  \right)|^{2} d\boldsymbol{x} +2 \int_{\Omega} \chi \rho^{n+1}  |\nabla c^{n+1}- \nabla c^{n}|^{2} d\boldsymbol{x}.
\end{aligned}
\end{equation}
By Cauchy-Schwarz inequality and Gagliardo-Niremberg inequality, it yields
\begin{equation}\label{3.20-4}
\begin{aligned}
&2\int_{\Omega} \chi \rho^{n+1}  |\nabla c^{n+1}- \nabla c^{n}|^{2} \\
\leq& 2\chi \left\|\rho^{n+1}||_{L^2}||\nabla c^{n+1}- \nabla c^{n}\right\|^2_{L^4}\\
\leq& 2 \chi C_{gn}\left\|\rho^{n+1}\right\|_{L^2}
\left\|\nabla c^{n+1}- \nabla c^{n}\right\|_{L^2}
\left\|\Delta c^{n+1}- \Delta c^{n}\right\|_{L^2}\\
\leq&  2 \chi C_{gn} \left\|\rho^{n+1}\right\|_{L^2}
\left\|\nabla c^{n+1}- \nabla c^{n}\right\|_{L^2}
\left( \left\|\Delta c^{n+1}\right\|_{L^2} + \left\|\Delta c^{n}\right\|_{L^2} \right).
\end{aligned}
\end{equation}

Define an operator $A=-\Delta+\alpha I$.
Squaring and integrating \eqref{S2}, we have
\begin{equation}\label{3.20-5}
\left\|A c^{n+1}\right\|_{L^2}^2 +\frac{\tau}{\delta t} \left\|A^{\frac12}c^{n+1}\right\|^2_{L^2}
\leq \frac{\tau}{\delta t} \left\|A^{\frac12}c^{n}\right\|^2_{L^2}+ \gamma^2  \left\|\rho^{n+1}\right\|_{L^2}^2.
\end{equation}
Summing this inequality leads to $\frac{\tau}{\delta t}\left\|A^{\frac12}c^{n}\right\|^2_{L^2}\leq  \gamma^2 \sum\limits_{k=0}^{n} \left\|\rho^{k}\right\|_{L^2}^2$ with $c^{-1}=0$.
Gathering this with \eqref{3.20-5}, we obtain
\begin{equation}\label{lapc1}
\left\|A c^{n+1} \right\|^2_{L^2}
\leq \gamma^2 \sum_{k=0}^{n+1} \left\|\rho^{k}\right\|_{L^2}^2.
\end{equation}

According to the operator theory and functional analysis in \cite{Brezis},
the norm of the linear bounded operator $(-\Delta +\alpha I)^{-1}\Delta$ in $L^2$ is $1$.
To check this, we observe that the symbol of this linear operator is $\frac{ |\xi|^2}{ |\xi|^2+\alpha}$ that is less than $1$.
Then we have
\begin{equation}\label{lapc}
\left\|-\Delta c^{n+1} \right\|^2_{L^2}
\leq \left\|A c^{n+1} \right\|^2_{L^2}
\leq \gamma^2 \sum_{k=0}^{n+1} \left\|\rho^{k}\right\|_{L^2}^2.
\end{equation}

Finally, we substitute this estimate into the inequality \eqref{3.20-4} to get
\begin{equation}\label{3.20-6m}
\begin{aligned}
&2\int_{\Omega} \chi \rho^{n+1}  |\nabla c^{n+1}- \nabla c^{n}|^{2}\\
\leq&  4 \chi C_{gn}\left\|\rho^{n+1}\right\|_{L^2} \left\|\nabla c^{n+1}- \nabla c^{n}\right\|_{L^2}
\left( \sum_{k=0}^{n+1} \gamma^2 \left\|\rho^{k}\right\|_{L^2}^2 \right)^\frac12  \\
\leq& 4 \chi\gamma C_{gn}\|\rho^{n+1}\|_{L^2}^2\left\|\nabla c^{n+1}- \nabla c^{n}\right\|_{L^2}
+4 \chi C_{gn}\left\|\rho^{n+1}\right\|_{L^2} \left\|\nabla c^{n+1}- \nabla c^{n}\right\|_{L^2}
\left( \sum_{k=0}^{n} \gamma^2 \left\|\rho^{k}\right\|_{L^2}^2 \right)^\frac12  \\
\leq& \left(\frac14+4 \chi\gamma C_{gn}\left\|\nabla c^{n+1}- \nabla c^{n}\right\|_{L^2} \right) \left\|\rho^{n+1}\right\|_{L^2}^2
+ 16 \chi^2 \gamma^2 C_{gn}^2 \left\|\nabla c^{n+1}- \nabla c^{n}\right\|_{L^2}^2 \left( \sum_{k=0}^{n} \left\|\rho^{k}\right\|_{L^2}^2 \right).
\end{aligned}
\end{equation}
We derive from  the discrete energy dissipation \eqref{e8}  that
$$4 \chi \gamma C_{gn}\left\|\nabla c^{n+1}- \nabla c^{n}\right\|_{L^2} \leq \left(E_{tot}(\rho_0,c_0) - E_{tot}(\rho^{N_t},c^{N_t})\right)^\frac12 \sqrt{2 \chi} \gamma^\frac32. $$
Assume that $\chi\gamma$ is small enough such that $\left(E_{tot}(\rho_0,c_0) - E_{tot}(\rho^{N_t},c^{N_t})\right)^\frac12 \sqrt{2 \chi} \gamma^\frac32\leq \frac14$,
we gather \eqref{3.20-1}, \eqref{3.20-2} and \eqref{3.20-6m} to obtain
\begin{equation}\label{3.20-7m}
\begin{aligned}
\left\| \rho^{n+1} \right\|^2_{L^2}
\leq  & \widetilde{C}  \left\|\nabla c^{n+1}-\nabla c^n\right\|^2_{L^2}
\left( \sum_{k=0}^n \left\|\rho^{k}\right\|_{L^2}^2 \right)\\
&+ C \Big(4 \int_{\Omega} \rho^{n+1}  |\nabla \left( \log \rho^{n+1}-\chi  c^{n}   \right)|^{2} d\boldsymbol{x} \\
&+ \frac{2\tau\chi}{\gamma}\left\|\frac{c^{n+1}-c^n}{\delta t}\right\|^2_{L^2}
+ (4+2\tau) \chi \gamma C_{gn} M^2 \Big),
\end{aligned}
\end{equation}
where $\widetilde{C}= 16 C\chi^2 \gamma^2 C_{gn}^2$.
Setting $S_{n+1}=\delta t \sum\limits_{k=0}^{n+1} \left\| \rho^{k} \right\|^2_{L^2}$,
then \eqref{3.20-7m} reads
\begin{equation}\label{3.20-8m}
\begin{aligned}
S_{n+1}-S_{n}
\leq & \widetilde{C} \left\|\nabla (c^{n+1}-c^n)\right\|^2_{L^2} S_n \\
&+C \delta t \Big(4 \int_{\Omega} \rho^{n+1}  |\nabla \left( \log \rho^{n+1}-\chi  c^{n}   \right)|^{2} d\boldsymbol{x} \\
&+ \frac{2\tau\chi}{\gamma}\left\|\frac{c^{n+1}-c^n}{\delta t}\right\|^2_{L^2}
+ (4+2\tau) \chi \gamma C_{gn} M^2 \Big).
\end{aligned}
\end{equation}
According to the discrete energy dissipation \eqref{e8}, we have
\begin{equation}\label{hardelot}
\begin{aligned}
&\delta t \sum_{n=0}^{N_t-1} (4 C \int_{\Omega} \rho^{n+1}  |\nabla \left( \log \rho^{n+1}-\chi  c^{n}   \right)|^{2} d\boldsymbol{x}
+ \frac{2\tau\chi}{\gamma}\|\frac{c^{n+1}-c^n}{\delta t}\|^2_{L^2}
+ (4+2\tau) \chi \gamma C_{gn} M^2)\\
\leq & C \left(E_{tot}(\rho_0,c_0) - E_{tot}(\rho^{N_t},c^{N_t}) +  T \right) <  +\infty.
\end{aligned}
\end{equation}
Besides,
\begin{equation}\label{hardelot1}
\begin{aligned}
\tilde C \sum\limits_{n=0}^{N_t-1} \left\|\nabla (c^{n+1}-c^n)\right\|^2_{L^2}
\leq \widetilde{K}_3 :=\tilde C\frac{\gamma}{\chi} \left(E_{tot}(\rho_0,c_0) - E_{tot}(\rho^{N_t},c^{N_t})\right).
\end{aligned}
\end{equation}

Then we take sum of the inequality \eqref{3.20-8m} on $n$ and apply the discrete Gronwall lemma to obtain
\begin{equation}\label{3.20-9}
S_{N_t} = \delta t \sum\limits_{n=0}^{N_t} \left\|\rho^{n}\right\|^2_{L^2}
\leq K_3 <  +\infty,
\end{equation}
where $K_3:=\exp(\widetilde{K}_3)\left\|\rho_0\right\|^2_{L^2} + C\left( E_{tot}(\rho_0,c_0) - E_{tot}(\rho^{N_t},c^{N_t}) + T \right)$.

\end{proof}

\begin{lemma}\label{destL2}
For $n\in \{0,1,2,\ldots,N_t-1\}$, if the  initial data $\rho_0 \in L^2(\Omega)$ and Lemma \ref{destintegral} holds,  then there exists $C>0$ such that
$\|\rho^{n+1}\|_{L^2}\le C$ for the semi-discrete scheme \eqref{S1}-\eqref{S2}.
\end{lemma}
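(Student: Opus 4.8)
The plan is to mirror the proof of the continuous estimate in Lemma \ref{estL2} at the discrete level, replacing the time derivative by the backward difference and the time integral by a summation. Concretely, I would take the $L^2$ inner product of \eqref{S1} with $\rho^{n+1}$. Using the elementary identity $\left(\frac{\rho^{n+1}-\rho^n}{\delta t},\rho^{n+1}\right)=\frac{1}{2\delta t}\left(\|\rho^{n+1}\|_{L^2}^2-\|\rho^n\|_{L^2}^2+\|\rho^{n+1}-\rho^n\|_{L^2}^2\right)$ together with $-(\Delta\rho^{n+1},\rho^{n+1})=\|\nabla\rho^{n+1}\|_{L^2}^2$, and integrating the convection term by parts exactly as in \eqref{2.34}, I obtain a discrete analogue of \eqref{2.31} in which the only genuinely nonlinear contribution is $\frac{\chi}{2}\left((\rho^{n+1})^2,-\Delta c^{n}\right)$.

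The core of the argument is to control this term. As in the continuous case I would use the $c$-equation to eliminate the Laplacian, but here \eqref{S1} carries $c^{n}$ while the natural relation \eqref{S2} is written at level $n+1$; this mismatch is the first point requiring care. I would resolve it exactly as in Lemma \ref{destintegral}: write $-\Delta c^{n}=-\Delta c^{n+1}+\Delta(c^{n+1}-c^{n})$, substitute $-\Delta c^{n+1}=\gamma\rho^{n+1}-\tau\frac{c^{n+1}-c^n}{\delta t}-\alpha c^{n+1}$ from \eqref{S2}, and discard the term $-\frac{\alpha\chi}{2}((\rho^{n+1})^2,c^{n+1})\le 0$ by the positivity $c^{n+1}\ge 0$ proved in Section 4. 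The leading contribution then becomes the cubic term $\frac{\chi\gamma}{2}((\rho^{n+1})^2,\rho^{n+1})$, which I would bound by the Gagliardo--Nirenberg inequality and Young's inequality precisely as in \eqref{2.35}, absorbing the resulting $\|\nabla\rho^{n+1}\|_{L^2}^2$ into the diffusion term on the left. The remaining correction $\frac{\chi}{2}((\rho^{n+1})^2,\Delta(c^{n+1}-c^{n}))$ and the term involving $\frac{c^{n+1}-c^n}{\delta t}$ would be estimated through Gagliardo--Nirenberg, the elliptic bound \eqref{lapc} on $\|\Delta c^{n+1}\|_{L^2}$, and the energy-dissipation control of $\|\nabla(c^{n+1}-c^{n})\|_{L^2}$ extracted from \eqref{e8}.

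After these reductions I would arrive at an inequality of the form $a_{n+1}-a_n\le \delta t\,g_n\,a_{n+1}+\delta t\,h_n$ with $a_n=\|\rho^n\|_{L^2}^2$, where both $\delta t\sum_n g_n$ and $\delta t\sum_n h_n$ are finite. This finiteness is exactly what Lemma \ref{destintegral} supplies: the quantity $\delta t\sum_n\|\rho^{n+1}\|_{L^2}^2$ and, via \eqref{e8}, $\delta t\sum_n\|\nabla(c^{n+1}-c^n)\|_{L^2}^2$ are bounded, which plays the role of the integral bounds $\int_t^{t+1}\|\rho\|_{L^2}^2\le K_0$ used by the uniform Gronwall Lemma \ref{ug} in the proof of Lemma \ref{estL2}. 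A discrete Gronwall inequality then yields $\|\rho^{n+1}\|_{L^2}\le C$ uniformly in $n$.

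The main obstacle is precisely this discrete Gronwall step. Because the coefficient $g_n$ is superlinear in the unknown (it inherits a factor $\|\rho^{n+1}\|_{L^2}^2$ from the cubic term), the multiplier of $a_{n+1}$ is itself of size $a_{n+1}$, and only its $\delta t$-weighted sum, not its pointwise size, is controlled by Lemma \ref{destintegral}. Converting the implicit inequality into an explicit bound therefore requires absorbing the factor $1-\delta t\,g_n$, which is where a restriction on $\delta t$ (or a suitable discrete \emph{uniform} Gronwall argument) enters; this is the discrete counterpart of the fact that the continuous proof needed the uniform form of the Gronwall lemma rather than its classical form. Once this point is handled, the desired bound follows with a constant depending on $T$, the data, and the summed quantities from Lemma \ref{destintegral}.
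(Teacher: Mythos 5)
Your setup (testing \eqref{S1} with $\rho^{n+1}$, the discrete energy identity for the time-difference term, integration by parts giving $\frac{\chi}{2}\left((\rho^{n+1})^2,-\Delta c^{n}\right)$) matches the paper, but the way you eliminate $-\Delta c^{n}$ diverges from the paper's proof and leaves a gap you yourself flag without closing. The paper does \emph{not} write $-\Delta c^{n}=-\Delta c^{n+1}+\Delta(c^{n+1}-c^{n})$ here; it simply uses the scheme's $c$-equation \eqref{S2} \emph{at the previous time level}, i.e. $-\Delta c^{n}=-\tau\frac{c^{n}-c^{n-1}}{\delta t}-\alpha c^{n}+\gamma\rho^{n}$, and drops $-\alpha c^{n}\le 0$ by positivity. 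The crucial consequence is that the cubic term becomes $\frac{\chi\gamma}{2}\left((\rho^{n+1})^2,\rho^{n}\right)$, so after Gagliardo--Nirenberg and Young the Gronwall coefficient is $g_n\sim \frac{\gamma}{\varepsilon}\|\rho^{n}\|_{L^2}^2+\gamma\|\rho^{n}\|_{L^2}+\frac{\tau}{\varepsilon}\|\frac{c^{n}-c^{n-1}}{\delta t}\|_{L^2}^2+\cdots$, which depends only on quantities at level $n$ whose $\delta t$-weighted sums are controlled by Lemma \ref{destintegral} and \eqref{e8}. The recursion $a_{n+1}\le a_n+\delta t\,g_n\,a_{n+1}$ with $g_n$ known then closes by the standard implicit discrete Gronwall lemma.

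Your route instead produces the genuinely cubic term $\frac{\chi\gamma}{2}\left((\rho^{n+1})^2,\rho^{n+1}\right)$, whence $g_n\sim\|\rho^{n+1}\|_{L^2}^2=a_{n+1}$ and the recursion is of Riccati type, $a_{n+1}\le a_n+C\delta t\,a_{n+1}^2+\cdots$. You correctly identify this as ``the main obstacle,'' but the sentence ``once this point is handled'' is exactly where the proof is missing: with only $\delta t\sum_n a_n\le K_3$ available, this superlinear implicit inequality cannot be closed by a discrete Gronwall argument without either an a priori pointwise bound on $a_{n+1}$ (which is what you are trying to prove) or a time-step restriction tied to the unknown solution. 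In the continuous setting the analogous superlinearity is rescued by the \emph{uniform} Gronwall Lemma \ref{ug} because the coefficient $g=C(1+\|\rho\|_{L^2}^2)$ multiplies $y$ evaluated at the same time and $\int g$ is finite; the discrete implicit analogue does not go through when $g_n$ contains $a_{n+1}$ itself. In addition, your correction term $\frac{\chi}{2}\left((\rho^{n+1})^2,\Delta(c^{n+1}-c^{n})\right)$ requires control of $\|\Delta(c^{n+1}-c^{n})\|_{L^2}$, whereas \eqref{e8} only gives summability of $\|\nabla(c^{n+1}-c^{n})\|_{L^2}^2$ and \eqref{lapc} gives a bound on $\|\Delta c^{n+1}\|_{L^2}^2$ that grows like $K_3/\delta t$; reproducing the machinery of \eqref{3.20-4}--\eqref{3.20-6m} here would reintroduce the extra smallness assumption on $\chi\gamma$ that Lemma \ref{destL2} does not need. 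The fix is simply to substitute the $c$-equation at level $n$, which removes both difficulties at once.
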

\begin{proof}
Taking the scalar product of \eqref{S1} with $\rho^{n+1}$, this leads to
\begin{equation}\label{3.31}
\begin{aligned}
\left( \frac{\rho^{n+1}-\rho^{n}}{\delta t} ,\rho^{n+1}\right)
-\left( \Delta \rho^{n+1} , \rho^{n+1}\right)
= - \left( \chi\nabla \cdot \left(\rho^{n+1} \nabla c^{n} \right) ,\rho^{n+1}\right)
\end{aligned}
\end{equation}
where
\begin{equation}\label{3.32}
\begin{aligned}
\left( \frac{\rho^{n+1}-\rho^{n}}{\delta t} ,\rho^{n+1}\right)
= \frac{1}{2 \delta t}
\left( \| \rho^{n+1} \|^2_{L^2} - \| \rho^{n} \|^2_{L^2} + \| \rho^{n+1}- \rho^{n}\|^2_{L^2}
\right),
\end{aligned}
\end{equation}
\begin{equation}\label{3.33}
\begin{aligned}
-\left( \Delta \rho^{n+1}  ,\rho^{n+1}\right)
= \| \nabla\rho^{n+1} \|^2_{L^2},
\end{aligned}
\end{equation}
and
\begin{equation}\label{3.34}
\begin{aligned}
- \left( \chi\nabla \cdot \left(\rho^{n+1} \nabla c^{n} \right) ,\rho^{n+1}\right)
= \chi \left( \rho^{n+1} \nabla c^{n}, \nabla \rho^{n+1} \right)
= \frac{\chi}{2} \left( |\rho^{n+1}|^2,  - \Delta c^{n} \right).
\end{aligned}
\end{equation}
Since $c^{n}\geq 0$ for all $n$, we derive from  the equation  \eqref{S2} that
\begin{equation}\label{3.34*}
\begin{aligned}
\frac{\chi}{2} \left( |\rho^{n+1}|^2,  - \Delta c^{n} \right)
&= \frac{\chi}{2} \left( |\rho^{n+1}|^2,  -\tau\frac{c^{n}-c^{n-1}}{\delta t}- \alpha c^{n} + \gamma \rho^{n} \right)  \\
&\leq \frac{\chi \gamma}{2} \left( |\rho^{n+1}|^2,  \rho^{n} \right)
-\frac{\tau\chi}{2} \left( |\rho^{n+1}|^2, \frac{ c^n-c^{n-1}}{\delta t} \right).
\end{aligned}
\end{equation}
Applying Cauchy-Schwarz inequality, Gagliardo-Nirenberg inequality and Young's inequality in turn, we derive
\begin{equation}\label{3.35}
\begin{aligned}
\frac{\chi \gamma}{2} \left( |\rho^{n+1}|^2, \rho^n \right)
&\leq \frac{\chi \gamma}{2} \|\rho^n\|_{L^2} \|(\rho^{n+1})^2\|_{L^2} \\
&\leq  \frac{\chi \gamma C_{gn}}{2} \|\rho^n\|_{L^2} \|\rho^{n+1}\|_{L^2} (\|\rho^{n+1}\|_{L^2}+\|\nabla\rho^{n+1}\|_{L^2})  \\
&\leq \frac{\chi \gamma C_{gn}}{2}
\left(\frac 1 \varepsilon\|\rho^n\|_{L^2}^2+\|\rho^n\|_{L^2} \right) \|\rho^{n+1}\|^2_{L^2}
+ \frac{\chi \gamma C_{gn}}{2} \varepsilon  \|\nabla\rho^{n+1}\|^2_{L^2},
\end{aligned}
\end{equation}
and similarly

\begin{equation}\label{3.35add}
\begin{aligned}
-\frac{\tau\chi}{2} \left( |\rho^{n+1}|^2, \frac{ c^n-c^{n-1}}{\delta t} \right)
\leq& \frac{\tau\chi C_{gn}}{2}
\left(\frac{1}{\varepsilon } \left\| \frac{ c^n- c^{n-1}}{\delta t} \right\|_{L^2}^2 + \left\| \frac{c^n- c^{n-1}}{\delta t} \right\|_{L^2} \right) \|\rho^{n+1}\|^2_{L^2}\\
+& \frac{\tau\chi C_{gn}}{2} \varepsilon \|\nabla\rho^{n+1}\|^2_{L^2}.
\end{aligned}
\end{equation}
Substituting \eqref{3.32}-\eqref{3.35add} into \eqref{3.31}, we obtain
\begin{equation}\label{3.36}
\begin{aligned}
&\frac{1}{2 \delta t}
\left( \| \rho^{n+1} \|^2_{L^2} - \| \rho^{n} \|^2_{L^2} + \| \rho^{n+1}- \rho^{n}\|^2_{L^2}
\right) + \| \nabla\rho^{n+1} \|^2_{L^2}  \\
\leq &
  \frac{\chi C_{gn}}{2} \left(\frac \gamma \varepsilon \|\rho^n\|_{L^2}^2+\gamma\|\rho^n\|_{L^2}
+ \frac{\tau}{\varepsilon } \left\| \frac{ c^n- c^{n-1}}{\delta t} \right\|_{L^2}^2
+ \tau \left\| \frac{c^n- c^{n-1}}{\delta t} \right\|_{L^2}\right)
\|\rho^{n+1}\|^2_{L^2}  \\
+ &\frac{\chi C_{gn}}{2}\varepsilon\left(\gamma +\tau\right)
\|\nabla\rho^{n+1}\|^2_{L^2}.
\end{aligned}
\end{equation}

As long as $\varepsilon$ is small enough so that $\frac{\chi C_{gn}}{2} \left(\gamma +\tau\right) \varepsilon\leq 1$, we have
\begin{equation}\label{3.37}
\begin{aligned}
\| \rho^{n+1} \|^2_{L^2}
\leq &
\| \rho^{n} \|^2_{L^2}
+\chi C_{gn}\delta t \Big(\frac \gamma \varepsilon \|\rho^n\|_{L^2}^2+\gamma\|\rho^n\|_{L^2}
+ \frac{\tau}{\varepsilon } \left\| \frac{ c^n- c^{n-1}}{\delta t} \right\|_{L^2}^2 \\
+ & \tau \left\| \frac{c^n- c^{n-1}}{\delta t} \right\|_{L^2}\Big)
\|\rho^{n+1}\|^2_{L^2}.
\end{aligned}
\end{equation}
Since $\delta t \sum_{n=0}^{N_t} \|\rho^{n}\|^2_{L^2} \leq K_3$ in Lemma \ref{destintegral} and $\sum_{n=1}^{N_t} \| \frac{ c^n- c^{n-1}}{\delta t} \|_{L^2}^2$ is bounded due to the discrete energy dissipation formula \eqref{e8},
we take the sum of \eqref{3.37} on $n$ and apply the discrete Gronwall lemma to obtain
\begin{equation}\label{3.38}
\| \rho^{n+1} \|^2_{L^2}
\leq K_4< +\infty,
\end{equation}
where
\begin{equation}\label{3.39}
\begin{aligned}
K_4:=\| \rho_0 \|^2_{L^2} \exp \left(CT \left(
K_3+K_3^{\frac{1}{2}} + \tau \left\| \frac{ c^n- c^{n-1}}{\delta t} \right\|_{L^2}^2
+ \tau \left\| \frac{c^n- c^{n-1}}{\delta t} \right\|_{L^2} \right)\right),
\end{aligned}
\end{equation}
with $\frac 1 \varepsilon$ being absorbed by the constant $C$.

\end{proof}

\begin{lemma}\label{destLp}
For $n\in \{0,1,2,\ldots,N_t-1\}$ and $1<p<+\infty$, if the initial data $\rho_0 \in L^p(\Omega)$ and Lemma \ref{destL2} holds,  then there exists $C>0$ such that  $\|\rho^{n+1}\|_{L^p}\le C$ for the semi-discrete scheme \eqref{S1}-\eqref{S2}.
\end{lemma}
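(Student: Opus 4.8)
The plan is to transcribe the continuous $L^p$ estimate of Lemma~\ref{estLp} to the semi-discrete level, following the same adaptation that led from the continuous $L^2$ bound to Lemma~\ref{destL2}. First I would test the density equation \eqref{S1} against $(\rho^{n+1})^{2p-1}$, which is legitimate since Theorem~4.1 guarantees $\rho^{n+1}>0$ so that all powers are well defined. The diffusion term reproduces the continuous identity
\begin{equation*}
-\left( \Delta \rho^{n+1}, (\rho^{n+1})^{2p-1} \right)
= \frac{2p-1}{p^2}\left\| \nabla\bigl((\rho^{n+1})^{p}\bigr) \right\|^2_{L^2},
\end{equation*}
and an integration by parts turns the chemotaxis term into $\frac{(2p-1)\chi}{2p}\bigl((\rho^{n+1})^{2p}, -\Delta c^n\bigr)$. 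Into this I would substitute $-\Delta c^n = \gamma\rho^n - \tau\frac{c^n-c^{n-1}}{\delta t} - \alpha c^n$ from \eqref{S2}, discarding the favourable $-\alpha c^n$ contribution by $c^n\ge 0$, exactly as in \eqref{3.34*}.

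The one genuinely new ingredient, relative to the continuous proof, is the discrete time-difference term, for which the exact identity $(\partial_t\rho,\rho^{2p-1})=\frac{1}{2p}\frac{d}{dt}\|\rho^p\|^2_{L^2}$ is unavailable. Here I would instead use the convexity of $\phi(s)=\frac{s^{2p}}{2p}$: since $\phi'(s)=s^{2p-1}$, convexity gives
\begin{equation*}
\left( \frac{\rho^{n+1}-\rho^{n}}{\delta t}, (\rho^{n+1})^{2p-1} \right)
\ge \frac{1}{2p\,\delta t}\left( \left\|\rho^{n+1}\right\|^{2p}_{L^{2p}} - \left\|\rho^{n}\right\|^{2p}_{L^{2p}} \right),
\end{equation*}
which is precisely the telescoping-friendly form the subsequent discrete Gronwall step requires. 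This plays the same role as the convexity inequality \eqref{e6} in the energy argument.

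With these pieces assembled, I would bound $\gamma\bigl((\rho^{n+1})^{2p},\rho^n\bigr)$ and $-\tau\bigl((\rho^{n+1})^{2p},\frac{c^n-c^{n-1}}{\delta t}\bigr)$ exactly as in \eqref{2.45-1}--\eqref{2.45-2}: a Cauchy--Schwarz step, then the Gagliardo--Nirenberg inequality \eqref{GN} applied to $(\rho^{n+1})^p$ through $\|(\rho^{n+1})^{2p}\|_{L^2}=\|(\rho^{n+1})^p\|^2_{L^4}$, and finally Young's inequality to split off a term proportional to $\varepsilon\|\nabla((\rho^{n+1})^p)\|^2_{L^2}$. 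The $L^2$ factors $\|\rho^n\|_{L^2}$ are controlled by the bound $K_4$ of Lemma~\ref{destL2}, so the coefficient multiplying $\|(\rho^{n+1})^{2p}\|_{L^2}=\|\rho^{n+1}\|^{2p}_{L^{2p}}$ becomes a data-dependent constant. Choosing $\varepsilon$ small enough that $\frac{\chi p C_{gn}}{2}(\gamma+\tau)\varepsilon\le 1$ absorbs the gradient contributions into the favourable $\frac{2p-1}{p^2}\|\nabla((\rho^{n+1})^p)\|^2_{L^2}$ term on the left.

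After absorption and multiplication by $2p\,\delta t$, the estimate takes the implicit Gronwall form
\begin{equation*}
\left\|\rho^{n+1}\right\|^{2p}_{L^{2p}}
\le \left\|\rho^{n}\right\|^{2p}_{L^{2p}}
+ C\,p\,\delta t\,b_n \left\|\rho^{n+1}\right\|^{2p}_{L^{2p}},
\end{equation*}
where $b_n = K_4 + K_4^{1/2} + \tau\|\tfrac{c^n-c^{n-1}}{\delta t}\|^2_{L^2} + \tau\|\tfrac{c^n-c^{n-1}}{\delta t}\|_{L^2}$ satisfies $\delta t\sum_n b_n\le C$ uniformly in $\delta t$ — the $K_4$ terms contributing $CK_4 T$, and the $c$-difference terms being summable via the discrete energy dissipation \eqref{e8} (with a Cauchy--Schwarz step in $n$ for the linear one). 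Summing over $n$ and applying the discrete Gronwall lemma then yields the uniform $L^{2p}$ bound, hence the $L^p$ bound for every $1<p<\infty$. I expect the main obstacle to lie not in the individual estimates but in making the discrete Gronwall argument $\delta t$-uniform: one needs $\delta t$ below a threshold so that $Cp\,\delta t\,b_n<1$ converts the implicit inequality into an explicit recursion, after which the $\delta t$-independence of $\delta t\sum_n b_n$ — furnished by Lemmas~\ref{destintegral} and \ref{destL2} together with \eqref{e8} — controls the resulting exponential. This is exactly where the semi-implicit lag ($c^n$, $\rho^n$ in place of $c^{n+1}$, $\rho^{n+1}$) is indispensable, and one must also track honestly the $p$-dependence entering through the prefactor $\frac{2p-1}{2p}$ and the Gronwall exponent.
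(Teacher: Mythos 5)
Your proposal is correct and follows essentially the same route as the paper: testing \eqref{S1} against $(\rho^{n+1})^{2p-1}$, integrating the chemotaxis term by parts and substituting $-\Delta c^{n}$ from \eqref{S2}, controlling $\gamma((\rho^{n+1})^{2p},\rho^{n})$ and the $\tau$-term via Cauchy--Schwarz, Gagliardo--Nirenberg and Young with the $K_4$ bound of Lemma~\ref{destL2}, absorbing the gradient terms by the same smallness condition on $\varepsilon$, and closing with a discrete Gronwall argument whose coefficients are summable by \eqref{e8}. The only cosmetic difference is that you handle the time-difference term by convexity of $s\mapsto s^{2p}/(2p)$ whereas the paper uses H\"older and Young on the cross term $(\rho^{n},(\rho^{n+1})^{2p-1})$; both yield the identical telescoping inequality.
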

\begin{proof}
Taking the scalar product of \eqref{S1} with $(\rho^{n+1})^{2p-1}$, this leads to
\begin{align}\label{nd1}
\left( \frac{\rho^{n+1}-\rho^{n}}{\delta t} ,(\rho^{n+1})^{2p-1}\right)
-\left( \Delta \rho^{n+1} , (\rho^{n+1})^{2p-1}\right)
= - \left( \chi\nabla \cdot \left(\rho^{n+1} \nabla c^{n} \right) ,(\rho^{n+1})^{2p-1}\right)
\end{align}
where
\begin{equation}\label{nd2}
\begin{aligned}
\frac{1}{\delta t} \left( \rho^{n}, (\rho^{n+1})^{2p-1} \right)
\leq &\frac{1}{\delta t} \|\rho^{n}\|_{L^{2p}}
\|\rho^{n+1}\|_{L^{2p}}^{2p-1}\\
\leq &\frac{1}{\delta t} \left( \frac{1}{2p} \|\rho^{n}\|_{L^{2p}}^{2p}
+\frac{2p-1}{2p} \|\rho^{n+1}\|_{L^{2p}}^{2p} \right),
\end{aligned}
\end{equation}
\begin{align}\label{nd2*}
-\left( \Delta \rho^{n+1}  ,(\rho^{n+1})^{2p-1}\right)
= \frac{2p-1}{p^2}\| \nabla((\rho^{n+1})^p) \|^2_{L^2},
\end{align}
and
\begin{equation}\label{nd3}
\begin{aligned}
- \left( \chi\nabla \cdot \left(\rho^{n+1} \nabla c^{n} \right) ,(\rho^{n+1})^{2p-1}\right)
=& \chi (2p-1) \left( (\rho^{n+1})^{2p-1} \nabla c^{n}, \nabla \rho^{n+1} \right)\\
=& \frac{(2p-1)\chi}{2p} \left( |\rho^{n+1}|^{2p},  - \Delta c^{n} \right).
\end{aligned}
\end{equation}

According to the equation about $c^{n}$ \eqref{S2} and $c^{n}\geq 0$ for all $n$, we have
\begin{equation}\label{nd4}
\begin{aligned}
\left( |\rho^{n+1}|^{2p},  - \Delta c^{n} \right)
&=\left( |\rho^{n+1}|^{2p}, -\tau\frac{c^{n}-c^{n-1}}{\delta t}- \alpha c^{n} + \gamma \rho^{n} \right) \\
&\leq \gamma \left( |\rho^{n+1}|^{2p}, \rho^{n} \right)
-\tau\left( |\rho^{n+1}|^{2p}, \frac{c^n-c^{n-1}}{\delta t} \right).
\end{aligned}
\end{equation}

Applying Cauchy-Schwarz inequality, Gagliardo-Nirenberg inequality and Young's inequality in turn, we derive
\begin{equation}\label{nd4-1}
\begin{aligned}
\gamma \left( |\rho^{n+1}|^{2p}, \rho^n \right)
&\leq \gamma \|\rho^n\|_{L^2} \|(\rho^{n+1})^{2p}\|_{L^2} \\
&\leq \gamma C_{gn} \|\rho^n\|_{L^2} \|(\rho^{n+1})^p\|_{L^2}
\left(\|\nabla((\rho^{n+1})^p)\|_{L^2} + \|(\rho^{n+1})^p\|_{L^2}\right) \\
&\leq \gamma C_{gn} \left(\frac 1 \varepsilon \|\rho^n\|_{L^2}^2 + \|\rho^n\|_{L^2} \right)
\|(\rho^{n+1})^p\|_{L^2}^2
+ \gamma C_{gn} \varepsilon \|\nabla((\rho^{n+1})^p)\|_{L^2}^2 \\
&\leq \gamma C_{gn} \left( \frac{1}{\varepsilon} K_4+ K_4^{\frac{1}{2}}  \right)
\|(\rho^{n+1})^p\|_{L^2}^2
+ \gamma C_{gn} \varepsilon  \|\nabla((\rho^{n+1})^p)\|_{L^2}^2,
\end{aligned}
\end{equation}
where $\|\rho^{n}\|^2_{L^2} \leq K_4$ is known in Lemma \ref{destL2}.
Similarly,
\begin{equation}\label{nd4-2}
\begin{aligned}
 -\tau\left( |\rho^{n+1}|^{2p}, \frac{c^n-c^{n-1}}{\delta t} \right)
\leq &\tau C_{gn} \left(\frac{1}{\varepsilon} \left\|\frac{c^n-c^{n-1}}{\delta t} \right\|^2_{L^2}
+\left\|\frac{c^n-c^{n-1}}{\delta t} \right\|_{L^2} \right)
\|(\rho^{n+1})^p\|_{L^2}^2\\
+ &\tau C_{gn} \varepsilon  \|\nabla((\rho^{n+1})^p)\|_{L^2}^2.
\end{aligned}
\end{equation}
Gathering these inequalities above, we obtain
\begin{equation}\label{nd5}
\begin{aligned}
&\frac{1}{\delta t} \|\rho^{n+1}\|_{L^{2p}}^{2p}
+ \frac{2p-1}{p^2}\| \nabla((\rho^{n+1})^p) \|^2_{L^2} \\
\leq & \frac{1}{\delta t} \left( \frac{1}{2p} \|\rho^{n}\|_{L^{2p}}^{2p}
+\frac{2p-1}{2p} \|\rho^{n+1}\|_{L^{2p}}^{2p} \right)
+ \frac{(2p-1)\chi C_{gn} \varepsilon}{2p} \left( \gamma + \tau \right)
\|\nabla((\rho^{n+1})^p)\|_{L^2}^2 \\
+ & \frac{(2p-1)\chi C_{gn} }{2p} \left( \frac{\gamma}{\varepsilon} K_4 + \gamma K_4^{\frac{1}{2}}
+ \frac{\tau}{\varepsilon} \left\|\frac{c^n-c^{n-1}}{\delta t} \right\|^2_{L^2} +  \tau \left\|\frac{c^n-c^{n-1}}{\delta t} \right\|_{L^2} \right)
\|(\rho^{n+1})^p\|_{L^2}^2.
\end{aligned}
\end{equation}
If $\varepsilon$ is small enough so that $\frac{\chi p C_{gn}}{2} (\gamma + \tau) \varepsilon\leq 1$, we have
\begin{equation}\label{nd6}
\begin{aligned}
\|\rho^{n+1}\|_{L^{2p}}^{2p}
\leq &\|\rho^{n}\|_{L^{2p}}^{2p}
+ (2p-1)\chi C_{gn} \delta t \Big( \frac{\gamma}{\varepsilon} K_4 + \gamma K_4^{\frac{1}{2}}
+ \frac{\tau}{\varepsilon} \left\|\frac{c^n-c^{n-1}}{\delta t} \right\|^2_{L^2} \\
+&  \tau \left\|\frac{c^n-c^{n-1}}{\delta t} \right\|_{L^2} \Big)
\|(\rho^{n+1})^p\|_{L^2}^2.
\end{aligned}
\end{equation}
Since $\sum_{n=1}^{N_t} \left\| \frac{ c^n- c^{n-1}}{\delta t} \right\|_{L^2}^2$ is bounded due to the discrete energy dissipation \eqref{e8},
we take the sum of \eqref{nd6} on $n$ and apply the discrete Gronwall lemma to obtain
\begin{equation}\label{nd7}
\|\rho^{n+1}\|_{L^{2p}}^{2p}
\leq K_5< +\infty, \ \ 1<p<\infty,
\end{equation}
where
\begin{equation}\label{nd8}
\begin{aligned}
K_5:=\| \rho_0 \|_{L^{2p}}^{2p}
\exp \left( CT \left(
K_4+K_4^{\frac{1}{2}} + \tau \left\| \frac{ c^n- c^{n-1}}{\delta t} \right\|_{L^2}^2
+ \tau \left\| \frac{c^n- c^{n-1}}{\delta t} \right\|_{L^2} \right)\right),
\end{aligned}
\end{equation}
with $\frac 1 \varepsilon$ being absorbed by the constant $C$.

\end{proof}

\section{Error estimates}
Now we rigorously derive the error estimate in $L^p$-norm $(1<p<\infty)$ for the semi-implicit Euler scheme \eqref{S1}-\eqref{S2}.
Since we have to compute consistency errors, we assume here that the solutions
$\rho$ and $c$ are as smooth as needed. We recall from Section \ref{bounds} that $\rho$ belongs to $L^\infty(\Omega)$.

Firstly we define error functions as follows:
\begin{equation}\label{error}
e_{\rho}^{n+1} = \rho(\cdot,t^{n+1}) - \rho^{n+1}, \ \
e_c^{n+1} = c(\cdot,t^{n+1}) - c^{n+1}, \ \ n=0,1,2,\ldots,N_t-1.
\end{equation}
Then we replace approximate solutions $(\rho^{n}, c^{n})$ in semi-discrete scheme \eqref{S1}-\eqref{S2} with exact solutions $(\rho(\cdot,t^{n}), c(\cdot,t^{n}))$ to get
\begin{align}
&\frac{\rho(t^{n+1})-\rho(t^{n})}{\delta t}
- \Delta \rho(t^{n+1}) + \chi\nabla \cdot \left(\rho(t^{n+1}) \nabla c(t^{n}) \right)
= T_{\rho}(t^{n+1}),\label{C1} \\
&
\tau\frac{c(t^{n+1})-c(t^{n})}{\delta t}- \Delta c(t^{n+1}) + \alpha c(t^{n+1}) - \gamma \rho(t^{n+1})
= \tau T_{c}(t^{n+1}),\label{C2}
\end{align}
Assume that the solutions $(\rho, c)$ of the KS equations \eqref{2.1}-\eqref{2.2} are smooth enough, by a standard Taylor expansion at $t^{n+1}$, we have the truncation error
\begin{align}
&T_{\rho}(t^{n+1}) = - \delta t \left(
\frac{1}{2} \frac{\partial^2 \rho(t^{n+1})}{\partial t^2}
+ \chi\nabla \cdot \left(\rho(t^{n+1}) \nabla \frac{\partial c(t^{n+1})}{\partial t}\right) \right), \label{Trho} \\
&T_{c}(t^{n+1}) = - \delta t \left(
\frac{1}{2} \frac{\partial^2 c(t^{n+1})}{\partial t^2} \right).  \label{Tc}
\end{align}

Subtracting the semi-discrete equation \eqref{S1} from the consistency estimate \eqref{C1}, we have
\begin{equation}\label{eq1}
\begin{aligned}
\frac{e_\rho^{n+1}-e_\rho^{n}}{\delta t}
-\Delta e_\rho^{n+1}
+ \chi\nabla \cdot \left(\rho(t^{n+1}) \nabla c(t^{n}) - \rho^{n+1} \nabla c^{n} \right)
= T_{\rho}(t^{n+1}).
\end{aligned}
\end{equation}
Since that
\begin{equation}\label{eq2}
\begin{aligned}
&\rho(t^{n+1}) \nabla c(t^{n}) - \rho^{n+1} \nabla c^{n}\\
= &\rho(t^{n+1}) \nabla c(t^{n}) - \rho(t^{n+1}) \nabla c^{n} +  \rho(t^{n+1}) \nabla c^{n} - \rho^{n+1} \nabla c^{n}\\
= &\rho(t^{n+1}) \nabla e_c^{n} + e_\rho^{n+1} \nabla c^{n},
\end{aligned}
\end{equation}
we can rewrite the error equation for $e_\rho^{n+1}$ as
\begin{equation}\label{erho}
\begin{aligned}
\frac{e_\rho^{n+1}-e_\rho^{n}}{\delta t}
-\Delta e_\rho^{n+1}
+ \chi\nabla \cdot \left( \rho(t^{n+1}) \nabla e_c^{n} + e_\rho^{n+1} \nabla c^{n} \right)
= T_{\rho}(t^{n+1}).
\end{aligned}
\end{equation}
Subtract the semi-discrete equation \eqref{S2} from the consistency estimate \eqref{C2}, we obtain the error equation for $e_c^{n+1}$

\begin{equation}\label{ec}
\begin{aligned}
\tau\frac{e_c^{n+1}-e_c^{n}}{\delta t}
-\Delta e_c^{n+1} + \alpha e_c^{n+1} - \gamma e_\rho^{n+1}
= \tau T_{c}(t^{n+1}).
\end{aligned}
\end{equation}

When analyzing the $L^p$-bound of $e_\rho^{n+1}$, the estimation of $\nabla e_c^{n}$ is a key point, so we give the following lemma in advance.

\begin{lemma}\label{gradientC}
For the parabolic-elliptic case $\tau=0$, there exists  $C>0$ such that
\begin{equation}\label{rg3+}
\| \nabla e_c^{n}\|_{L^{2p}}^{2p} \leq C \|e_\rho^{n+1}\|_{L^{2p}}^{2p}.
\end{equation}
For the parabolic-parabolic case $\tau>0$, there exists  $C>0$ such that
\begin{equation}\label{fin}
\max_{k\leq n} \| \nabla e_c^{k}\|_{L^{2p}}^{2p} \leq C \left(\max_{k\leq n} \|e_\rho^{k}\|_{L^{2p}}^{2p}+\delta t \right).
\end{equation}
\end{lemma}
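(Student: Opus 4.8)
The plan is to treat the two regimes separately, in each case reducing the bound on $\nabla e_c$ to the $L^{2p}$-boundedness of the gradient of a resolvent operator. \emph{Elliptic case.} When $\tau=0$, the error equation \eqref{ec} collapses to the purely elliptic identity $-\Delta e_c^{n+1}+\alpha e_c^{n+1}=\gamma e_\rho^{n+1}$, so that $e_c^{n+1}=\gamma(-\Delta+\alpha I)^{-1}e_\rho^{n+1}$. I would then invoke the standard $L^{q}$-elliptic regularity theory (Calder\'on--Zygmund / Agmon--Douglis--Nirenberg, or, in the periodic setting, a Mikhlin-type multiplier theorem, whose symbol $i\xi/(|\xi|^2+\alpha)$ is at once seen to be a bounded multiplier) to conclude that $\nabla(-\Delta+\alpha I)^{-1}$ is bounded on $L^{2p}(\Omega)$ for every $1<p<\infty$. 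This yields $\|\nabla e_c^{n+1}\|_{L^{2p}}\le C\|e_\rho^{n+1}\|_{L^{2p}}$, and raising to the $2p$-th power gives \eqref{rg3+}.

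\emph{Parabolic case.} When $\tau>0$ I would move the time-difference term in \eqref{ec} to the right-hand side and read it, for each index $k$, as an elliptic problem with a shifted zeroth-order coefficient,
\begin{equation*}
(-\Delta + \beta_{\delta t} I)\,e_c^{k} = \gamma e_\rho^{k} + \tau T_c(t^{k}) + \frac{\tau}{\delta t}\,e_c^{k-1},\qquad \beta_{\delta t}:=\alpha+\frac{\tau}{\delta t},
\end{equation*}
with $e_c^{-1}=0$. Writing $B:=(-\Delta+\beta_{\delta t}I)^{-1}$ and $g^{j}:=\gamma e_\rho^{j}+\tau T_c(t^{j})$, unravelling this recursion expresses $e_c^{k}$ as the discrete convolution
\begin{equation*}
e_c^{k} = \sum_{j=0}^{k}\Big(\tfrac{\tau}{\delta t}B\Big)^{k-j} B\, g^{j}.
\end{equation*}
Applying $\nabla$ and the triangle inequality, the task reduces to bounding, uniformly in $\delta t$, the $L^{2p}$-operator norms of $\nabla(\tfrac{\tau}{\delta t}B)^{m}B=(\tfrac{\tau}{\delta t})^{m}\nabla(-\Delta+\beta_{\delta t}I)^{-(m+1)}$.

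To control these I would use the subordination formula $(-\Delta+\lambda I)^{-(m+1)}=\frac{1}{m!}\int_0^{\infty} t^{m}e^{-\lambda t}e^{t\Delta}\,dt$ together with the gradient heat-semigroup estimate $\|\nabla e^{t\Delta}\|_{\mathcal L(L^{2p},L^{2p})}\le C t^{-1/2}$, which gives $\|\nabla(-\Delta+\lambda I)^{-(m+1)}\|\le C\,\Gamma(m+\tfrac12)/(m!\,\lambda^{m+1/2})$. With $\lambda=\beta_{\delta t}\ge \tau/\delta t$ and $\theta:=(\tau/\delta t)/\beta_{\delta t}=\tau/(\alpha\delta t+\tau)$, the $m$-th operator norm is at most $C\,(m+1)^{-1/2}\theta^{m}(\delta t/\tau)^{1/2}$. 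Summing, and using $\sum_{m\ge0}(m+1)^{-1/2}\theta^{m}\le C(1-\theta)^{-1/2}=C(\alpha\delta t/(\alpha\delta t+\tau))^{-1/2}\le C(\delta t)^{-1/2}$, the two powers of $\delta t$ cancel and I obtain $\|\nabla e_c^{k}\|_{L^{2p}}\le C\max_{j\le k}\|g^{j}\|_{L^{2p}}$. Bounding $\|g^{j}\|_{L^{2p}}\le \gamma\|e_\rho^{j}\|_{L^{2p}}+\tau\|T_c(t^{j})\|_{L^{2p}}$ and recalling from \eqref{Tc} that $\|T_c(t^{j})\|_{L^{2p}}\le C\delta t$ then gives $\|\nabla e_c^{k}\|_{L^{2p}}\le C(\max_{j\le k}\|e_\rho^{j}\|_{L^{2p}}+\delta t)$, whence \eqref{fin} follows after taking the $2p$-th power, the maximum over $k\le n$, and using $(\delta t)^{2p}\le \delta t$.

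The main obstacle is precisely this uniformity in $\delta t$ in the parabolic case: applying the crude elliptic estimate step by step loses a factor $(\tau/\delta t)^{1/2}$ coming from the source $\tfrac{\tau}{\delta t}e_c^{k-1}$ and blows up as $\delta t\to0$. The resolution is that the loss is only apparent—the geometric decay $\theta^{m}$ of the discrete resolvent (with $1-\theta=O(\delta t)$) combines with the $(m+1)^{-1/2}$ decay inherited from the $t^{-1/2}$ gradient bound to yield a sum of size $O((\delta t)^{-1/2})$ that exactly offsets the $O((\delta t)^{1/2})$ prefactor. Making this cancellation rigorous, along with verifying the gradient heat-semigroup bound in the relevant periodic or Neumann geometry, is the technical heart of the argument; the elliptic case, by contrast, is immediate from stationary $L^{2p}$ regularity.
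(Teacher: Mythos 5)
Your argument is correct, and in the parabolic--parabolic case it takes a genuinely different route from the paper's. For $\tau=0$ both proofs are identical: invert $-\Delta+\alpha I$ and quote stationary $L^{2p}$ regularity. For $\tau>0$ both proofs start from the same unrolled recursion $e_c^{k}=\sum_j\bigl(\tfrac{\tau}{\delta t}B\bigr)^{k-j}Bg^{j}$, but the paper then retreats to $L^2$: it estimates $\|\nabla e_c^{n}\|_{L^{2p}}\leq C\|A^{s/2}\nabla e_c^{n}\|_{L^{2}}$ via the Sobolev embedding $H^{s}\subset L^{2p}$ with $s=1-\tfrac1p$, computes the operator norms $\|\nabla A^{s/2}(I+\tfrac{\delta t}{\tau}A)^{-k}\|_{\mathcal L(L^2,L^2)}$ by maximizing the explicit symbol $|\xi|(\alpha+|\xi|^2)^{s/2}(1+\tfrac{\delta t}{\tau}(\alpha+|\xi|^2))^{-k}$, and obtains summability from $\sum_k(2k-1-s)^{-(1+s)/2}\leq C(\delta t)^{(s-1)/2}$, finally returning to $L^{2p}$ on the data side via $L^{2p}\subset L^2$. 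You instead stay in $L^{2p}$ throughout and replace the symbol calculus by subordination of $(-\Delta+\beta_{\delta t}I)^{-(m+1)}$ to the heat semigroup together with the $t^{-1/2}$ gradient-smoothing bound; the cancellation $(\delta t)^{1/2}\cdot(1-\theta)^{-1/2}=O(1)$ plays exactly the role of the paper's $\delta t\cdot(\delta t)^{-1}$ cancellation. Each approach has something to recommend it: the paper's Plancherel computation is elementary but is really a Fourier/spectral argument (cleanest in the periodic setting, and requiring the spectral theorem for the Neumann Laplacian otherwise), and it loses nothing because it only needs $L^2$ data norms; your semigroup argument is boundary-condition-agnostic and avoids the Sobolev embedding entirely, at the price of having to justify $\|\nabla e^{t\Delta}\|_{\mathcal L(L^{2p},L^{2p})}\leq Ct^{-1/2}$ uniformly in $t$ for the Neumann or periodic Laplacian (harmless here, since the factor $e^{-\beta_{\delta t}t}$ with $\beta_{\delta t}\geq\alpha$ kills the large-$t$ range, but it is the one step you should write out). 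The only cosmetic discrepancy is the index pairing in \eqref{rg3+} ($\nabla e_c^{n}$ versus $e_\rho^{n+1}$), which your elliptic estimate delivers with matched indices exactly as the paper's does.
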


\begin{proof}
For the parabolic-elliptic case $\tau=0$, the error equation \eqref{ec} for $e_c^{n+1}$ reads
\begin{equation}\label{rg2+}
-\Delta e_c^{n+1} + \alpha e_c^{n+1} =\gamma e_\rho^{n+1}.
\end{equation}
Then \eqref{rg3+} follows from the standard elliptic estimate.

For the parabolic-parabolic case $\tau>0$, the error equation \eqref{ec} for $e_c^{n+1}$ reads
\begin{equation}\label{fin1}
(I+\frac{\delta t}{\tau} A)e_c^{n+1}=e_c^{n}+ \delta t T_{c}(t^{n+1})+ \frac{\gamma\delta t}{\tau}e_\rho^{n+1},
\end{equation}
where we have set the operator $A=-\Delta+\alpha I$.
With $e_c^0=0$, we have
\begin{equation}\label{fin2}
\nabla e_c^{n}=\delta t \sum_{k=0}^{n} \nabla (I+\frac{\delta t}{\tau} A)^{k-n-1} \left(T_{c}(t^{k})+\frac{\gamma}{\tau} e_\rho^{k} \right).
\end{equation}
We recall the Sobolev embedding $H^s(\Omega) \subset L^{2p}(\Omega)$ for $s=1-\frac1p$ in \cite{Brezis} to get
\begin{equation}\label{jo1}
\left\|\nabla e_c^{n} \right\|_{L^{2p}}
\leq C \left\| A^{\frac s 2} \nabla e_c^n \right\|_{L^2}
\leq C \delta t \sum_{k=0}^{n} \left\|\nabla  A^{\frac s 2} (I+\frac{\delta t}{\tau} A)^{k-n-1} \right\|_{\mathcal{L}(L^2,L^2)}
\left\|T_{c}(t^{k})+\frac{\gamma}{\tau} e_\rho^{k} \right\|_{L^2}.
\end{equation}
Appealing Plancherel's theorem \cite{PDE} we have that, for $k\geq 1$

\begin{equation}\label{jo2}
\left\|\nabla A^{\frac s 2} (I+\frac{\delta t}{\tau} A)^{-k} \right\|_{\mathcal{L}(L^2,L^2)}
=\sup_{|\xi|}  \left|\frac{|\xi|(\alpha + |\xi|^2)^{\frac s 2}}{\left(1+\frac{\delta t}{\tau}(\alpha+|\xi|^2)\right)^{k}} \right|  .
\end{equation}

\noindent To bound by above the right hand side of \eqref{jo2} we consider two cases, namely
$|\xi|^2\leq \alpha$ and $\alpha<|\xi|^2$. In the former case the right hand side
is bounded by $2^{\frac s 2}\alpha^{\frac 1 2 +\frac s 2}$. In the later case
we observe that the maximum of the function $|\xi|\mapsto \frac{ |\xi|^{1+s}}{(1+\frac{\delta t}{\tau} |\xi|^2)^k}$
is achieved for $\frac{\delta t}{\tau} |\xi|^2(2k-1-s)=1+s. $
Then the right hand side of \eqref{jo2} is bounded by above by
$\max\{2^{\frac s 2}\alpha^{\frac 1 2 +\frac s 2}, (\frac{(1+s)\tau}{(2k-1-s)\delta t}){^\frac{1+s}{2}}\}$.
Since
\begin{equation}\label{jo3}
\sum_{k=1}^n (2k-1-s)^{-\frac{1+s}{2}}\leq C_s n^{\frac12-\frac s 2}\leq C_s (\delta t)^{\frac s 2 -\frac 1 2} T^{\frac12-\frac s 2},
\end{equation}
we have
\begin{equation}\label{jo4}
\delta t \sum_{k=0}^n \left\|\nabla  A^{\frac s 2} (I+\frac{\delta t}{\tau} A)^{k-n-1} \right\|_{\mathcal{L}(L^2,L^2)}\leq C,
\end{equation}
and the conclusion follows promptly, appealing the consistency estimate \eqref{Tc} and the embedding $L^{2p}(\Omega) \subset L^2(\Omega)$.
\end{proof}

With the above preparations, we re ready to prove the following error estimates.
\begin{theorem}
Assume that exact solutions $\rho$ and $c$ of the general KS model \eqref{2.1}-\eqref{1.4} are smooth enough for a fixed final time $T \in (0,T_{\text{max}})$,
the positive initial data $\rho_0, c_0 \in L^q(\Omega)$ $(1<q\leq\infty)$, and the mass $M=\int_{\Omega} \rho_0 d\boldsymbol{x}$ is small enough to ensure $(2+\tau)\gamma \chi C_{gn}M< 1$,
where $T_{ \text{max}}$ is the maximal existence time of solutions and
$C_{gn}$ is the positive constant coefficient of Gagliardo-Nirenberg inequality in 2D  domain $\Omega$, then for the semi-discrete scheme \eqref{S1}-\eqref{S2} we have the estimate
\begin{align}\label{th2}
\|e_\rho^{n+1}\|_{L^p} + \|e_c^{n+1}\|_{L^p}
\leq C\delta t, \ \ n=0,1,2,\ldots,N_t-1, \ \ 1<p<\infty.
\end{align}
\end{theorem}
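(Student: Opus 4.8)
The plan is to mimic the a~priori estimates of Lemma~\ref{destLp}, but applied to the error equations, and to close everything with a discrete Gronwall argument in which the coupling through $e_c$ is controlled by Lemma~\ref{gradientC}. Since $\Omega$ is bounded, it suffices to establish the bound in $L^{2p}$ for every $1<p<\infty$: this gives the stated estimate for all $2<q<\infty$ directly, and for $1<p\le 2$ through the embedding $L^{2p}(\Omega)\subset L^{p}(\Omega)$. Throughout I use that $e_\rho^0=0$, $e_c^0=0$, that $\rho(\cdot,t^{n+1})\in L^\infty$ by Section~\ref{bounds}, that $\rho^n$ is bounded in every $L^q$ by Lemma~\ref{destLp}, and that by smoothness of $\rho,c$ the truncation terms satisfy $\|T_\rho(t^{n+1})\|_{L^{2p}}\le C\delta t$ and $\|T_c(t^{n+1})\|_{L^{2}}\le C\delta t$ from \eqref{Trho}--\eqref{Tc}.

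First I would test the error equation \eqref{erho} against $|e_\rho^{n+1}|^{2p-2}e_\rho^{n+1}$. The discrete time derivative gives, by convexity and Young's inequality exactly as in \eqref{nd2}, the lower bound $\tfrac{1}{2p\delta t}\big(\|e_\rho^{n+1}\|_{L^{2p}}^{2p}-\|e_\rho^{n}\|_{L^{2p}}^{2p}\big)$, while the diffusion term yields the nonnegative quantity $\tfrac{2p-1}{p^{2}}\|\nabla(|e_\rho^{n+1}|^{p})\|_{L^2}^{2}$, which I keep on the left to absorb gradients. Using the splitting \eqref{eq2}, the convection term separates into the contribution of $e_\rho^{n+1}\nabla c^{n}$ and that of $\rho(t^{n+1})\nabla e_c^{n}$. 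Integrating by parts, the first becomes $\tfrac{(2p-1)\chi}{2p}\big(|e_\rho^{n+1}|^{2p},-\Delta c^{n}\big)$ and is treated precisely as in \eqref{nd4}--\eqref{nd4-1}: inserting the equation \eqref{S2} for $c^n$, then the $L^{q}$ bounds of $\rho^{n}$, Cauchy--Schwarz, Gagliardo--Nirenberg and Young, one absorbs part of it into the gradient term and bounds the remainder by $C\|e_\rho^{n+1}\|_{L^{2p}}^{2p}$. The second convection term, since $\rho(t^{n+1})\in L^\infty$, is estimated by Cauchy--Schwarz and Young as an absorbable gradient piece plus $C\|\nabla e_c^{n}\|_{L^{2p}}^{2p}+C\|e_\rho^{n+1}\|_{L^{2p}}^{2p}$. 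Finally the consistency term is bounded by $\|T_\rho(t^{n+1})\|_{L^{2p}}\|e_\rho^{n+1}\|_{L^{2p}}^{2p-1}\le C(\delta t)^{2p}+C\|e_\rho^{n+1}\|_{L^{2p}}^{2p}$.

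Collecting these and multiplying by $2p\,\delta t$ gives
\begin{equation*}
\|e_\rho^{n+1}\|_{L^{2p}}^{2p}-\|e_\rho^{n}\|_{L^{2p}}^{2p}
\le C\delta t\,\|e_\rho^{n+1}\|_{L^{2p}}^{2p}
+C\delta t\,\|\nabla e_c^{n}\|_{L^{2p}}^{2p}
+C(\delta t)^{2p+1}.
\end{equation*}
In the parabolic--elliptic case $\tau=0$, Lemma~\ref{gradientC} bounds $\|\nabla e_c^{n}\|_{L^{2p}}^{2p}$ by $C\|e_\rho^{n+1}\|_{L^{2p}}^{2p}$, which for $\delta t$ small is absorbed into the left-hand side; the discrete Gronwall lemma then yields $\|e_\rho^{n+1}\|_{L^{2p}}^{2p}\le C\,N_t\,(\delta t)^{2p+1}=C(\delta t)^{2p}$, whence $\|e_\rho^{n+1}\|_{L^{2p}}\le C\delta t$. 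The estimate for $e_c$ then follows immediately from the elliptic identity \eqref{rg2+} and standard elliptic regularity, completing this case.

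The hardest part is the parabolic--parabolic case $\tau>0$, where $\nabla e_c^{n}$ depends on the entire history of $e_\rho$ through the Duhamel formula \eqref{fin2}, so Lemma~\ref{gradientC} only controls $\max_{k\le n}\|\nabla e_c^{k}\|_{L^{2p}}^{2p}$ by $C\big(\max_{k\le n}\|e_\rho^{k}\|_{L^{2p}}^{2p}+\delta t\big)$. A plain step-by-step Gronwall is therefore insufficient; instead I would run the induction on the monotone quantity $E_n:=\max_{k\le n}\|e_\rho^{k}\|_{L^{2p}}^{2p}$, summing the displayed inequality and inserting Lemma~\ref{gradientC} to reach a relation of the form $E_{n+1}\le C\delta t\sum_{m\le n}E_{m}+C(\delta t)^{2p}$. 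The delicate point is that the $\delta t$ appearing in \eqref{fin} originates from the first-order consistency $\|T_c\|\sim\delta t$ in \eqref{fin2}, so it must be accounted at its true order $(\delta t)^{2p}$ after being raised to the $2p$-th power; otherwise its summation over the $N_t\sim T/\delta t$ steps would degrade the convergence rate. The smallness hypothesis $(2+\tau)\gamma\chi C_{gn}M<1$, together with the smallness of $\chi\gamma$ already invoked in Lemma~\ref{destintegral}, guarantees that the diffusion term genuinely dominates the coupling so that all gradient pieces are absorbable; the discrete Gronwall lemma then gives $E_{n+1}\le C(\delta t)^{2p}$, i.e. $\|e_\rho^{n+1}\|_{L^{2p}}\le C\delta t$. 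Finally, testing the error equation \eqref{ec} against $|e_c^{n+1}|^{2p-2}e_c^{n+1}$ (or using \eqref{fin1} with the now-known bound on $e_\rho$) and applying the discrete Gronwall lemma once more yields $\|e_c^{n+1}\|_{L^{2p}}\le C\delta t$, and the passage to general $1<p<\infty$ is again by the embedding $L^{2p}(\Omega)\subset L^{p}(\Omega)$.
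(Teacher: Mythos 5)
Your proposal follows essentially the same route as the paper's proof: testing \eqref{erho} with the $(2p-1)$-power of the error, splitting the convection term via \eqref{eq2}, absorbing gradients after Gagliardo--Nirenberg and Young, invoking Lemma \ref{gradientC} (with the max-over-history quantity $\eta_n=\max_{k\le n}\|e_\rho^k\|_{L^{2p}}^{2p}$ in the parabolic--parabolic case), and closing with the discrete Gronwall lemma before treating $e_c$ analogously. The one point where you are more precise than the paper --- tracking the consistency contribution from $T_c$ at its true order $(\delta t)^{2p}$ rather than the cruder $\delta t$ written in \eqref{fin}, which is exactly what is needed for the per-step remainder to sum to $C(\delta t)^{2p}$ --- is a worthwhile clarification, but it does not change the overall argument.
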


\begin{comment}
Before we set out to prove the above theorem, we make some remarks on the assumptions of this theorem.
Assuming that exact solutions $\rho$ and $c$ are smooth enough allows us to have consistency errors.
We suppose that $\rho_0, c_0$ are positive to ensure that the exact solution and the discrete solution remain also positive.
Our result also requires the smallness of the mass $M=\int_{\Omega} \rho_0 d\boldsymbol{x}$
to use Lemma \ref{estintegral} and Lemma \ref{destintegral}.
\end{comment}

\begin{proof}
Taking the inner product of the error equation \eqref{erho} with $(e_\rho^{n+1})^{2p-1}$, this leads to
\begin{equation}\label{erhop1}
\begin{aligned}
&\left( \frac{e_\rho^{n+1}-e_\rho^{n}}{\delta t}, (e_\rho^{n+1})^{2p-1} \right)
- \left( \triangle e_\rho^{n+1},(e_\rho^{n+1})^{2p-1}\right)\\
=& -\chi \left( \nabla \cdot \left( \rho(t^{n+1}) \nabla e_c^{n} + e_\rho^{n+1} \nabla c^{n}\right), (e_\rho^{n+1})^{2p-1} \right)
+ \left( T_{\rho}(t^{n+1}),  (e_\rho^{n+1})^{2p-1}\right).
\end{aligned}
\end{equation}
Next, we estimate the bound of each term of this equation.

For the first term, we have
\begin{equation}\label{erhop2}
\begin{aligned}
&\left( \frac{e_\rho^{n+1}-e_\rho^{n}}{\delta t}, (e_\rho^{n+1})^{2p-1} \right)
= \frac{1}{\delta t} \|e_\rho^{n+1}\|_{L^{2p}}^{2p}
- \frac{1}{\delta t} \left( e_\rho^{n}, (e_\rho^{n+1})^{2p-1} \right),
\end{aligned}
\end{equation}
and
\begin{equation}\label{erhop3}
\begin{aligned}
\frac{1}{\delta t} \left( e_\rho^{n}, (e_\rho^{n+1})^{2p-1} \right)
\leq &\frac{1}{\delta t} \|e_\rho^{n}\|_{L^{2p}}
\|e_\rho^{n+1}\|_{L^{2p}}^{2p-1}\\
\leq &\frac{1}{\delta t} \left( \frac{1}{2p} \|e_\rho^{n}\|_{L^{2p}}^{2p}
+\frac{2p-1}{2p} \|e_\rho^{n+1}\|_{L^{2p}}^{2p} \right)
\end{aligned}
\end{equation}
by H\"{o}lder inequality and Young's inequality.

For the second term,
\begin{equation}\label{erhop4}
\begin{aligned}
- \left( \triangle e_\rho^{n+1},(e_\rho^{n+1})^{2p-1}\right)
=\frac{2p-1}{p^2} \| \nabla (e_\rho^{n+1})^p \|_{L^2}^2.
\end{aligned}
\end{equation}

For the last term, using H\"{o}lder inequality and Young's inequality, we obtain
\begin{equation}\label{erhop5}
\begin{aligned}
\left( T_{\rho}(t^{n+1}),  (e_\rho^{n+1})^{2p-1}\right)
\leq & \| T_\rho(t^{n+1}) \|_{L^{2p}}  \| e_\rho^{n+1} \|_{L^{2p}}^{2p-1} \\
\leq & \frac{1}{2p} \| T_\rho(t^{n+1}) \|_{L^{2p}}^{2p} + \frac{2p-1}{2p} \| e_\rho^{n+1} \|_{L^{2p}}^{2p}.
\end{aligned}
\end{equation}

For the the third and the most complex term, we treat it as two parts $-\chi \left( \nabla \cdot ( e_\rho^{n+1} \nabla c^{n}\right),$ $(e_\rho^{n+1})^{2p-1} )$ and $-\chi \left( \nabla \cdot \left( \rho(t^{n+1}) \nabla e_c^{n} \right), (e_\rho^{n+1})^{2p-1} \right)$ below.
According to the Green formula, the equation about $c^{n}$ \eqref{S2} and the positivity of $c^{n}$ for all $n$, the first part is estimated as
\begin{equation}\label{erhop6}
\begin{aligned}
&-\chi \left( \nabla \cdot \left(  e_\rho^{n+1} \nabla c^{n}\right), (e_\rho^{n+1})^{2p-1} \right)\\
=& \chi (2p-1) \left( ( e_\rho^{n+1})^{2p-1} \nabla c^{n}, \nabla  e_\rho^{n+1} \right) \\
=& \frac{(2p-1)\chi}{2p} \left( | e_\rho^{n+1}|^{2p},  - \Delta c^{n} \right)\\
=&
\frac{(2p-1)\chi}{2p} \left( | e_\rho^{n+1}|^{2p},  -\tau\frac{c^{n}-c^{n-1}}{\delta t}- \alpha c^{n} + \gamma \rho^{n}  \right)    \\
\leq& \frac{(2p-1)\chi}{2p} \left( | e_\rho^{n+1}|^{2p},  \gamma \rho^n \right)
 -\frac{(2p-1)\chi}{2p}\left( | e_\rho^{n+1}|^{2p}, \tau\frac{c^{n}-c^{n-1}}{\delta t}  \right).
\end{aligned}
\end{equation}
By Cauchy-Schwarz inequality, Gagliardo-Nirenberg inequality and Young's inequality, we derive
\begin{equation}\label{erhop6-1}
\begin{aligned}
\left( | e_\rho^{n+1}|^{2p}, \gamma \rho^n \right)
\leq& \gamma \|\rho^n\|_{L^2} \|(e_\rho^{n+1})^{2p}\|_{L^2}\\
\leq& \gamma C_{gn}  \|\rho^n\|_{L^2} \|(e_\rho^{n+1})^{p}\|_{L^2}
\left( \|\nabla(e_\rho^{n+1})^{p}\|_{L^2} + \|(e_\rho^{n+1})^{p}\|_{L^2} \right)\\
\leq& \gamma C_{gn} \left(\frac{1}{\varepsilon} \|\rho^n\|_{L^2}^2 + \|\rho^n\|_{L^2} \right)
\|(e_\rho^{n+1})^{p}\|_{L^2}^2
+\gamma C_{gn} \varepsilon  \|\nabla(e_\rho^{n+1})^{p}\|_{L^2}^2\\
\leq& \gamma C_{gn} \left( \frac{1}{\varepsilon}K_4 + K_4^{\frac{1}{2}} \right)
\|(e_\rho^{n+1})^{p}\|_{L^2}^2
+\gamma C_{gn} \varepsilon  \|\nabla(e_\rho^{n+1})^{p}\|_{L^2}^2,
\end{aligned}
\end{equation}
where $\|\rho^n\|^2_{L^2} \leq K_4$ for any $n$ is known in Lemma 5.3.
Similarly,
\begin{equation}\label{erhop6-2}
\begin{aligned}
&-\tau\left( | e_\rho^{n+1}|^{2p}, \frac{c^{n}-c^{n-1}}{\delta t}\right)\\
\leq& \tau  C_{gn} \left( \frac{1}{\varepsilon} \left\|\frac{c^{n}-c^{n-1}}{\delta t} \right\|_{L^2}^2 + \left\|\frac{c^{n}-c^{n-1}}{\delta t} \right\|_{L^2} \right)
\|(e_\rho^{n+1})^{p}\|_{L^2}^2
+ \tau  C_{gn} \varepsilon  \|\nabla(e_\rho^{n+1})^{p}\|_{L^2}^2
\end{aligned}
\end{equation}
Using Cauchy-Schwarz inequality, H\"{o}lder inequality and Young's inequality, besides, applying $L^\infty$-bound of exact solution $\rho$ in Section \ref{bounds}, the second part is estimated as

\begin{equation}\label{erhop7}
\begin{aligned}
&-\chi \left( \nabla \cdot \left( \rho(t^{n+1}) \nabla e_c^{n} \right), (e_\rho^{n+1})^{2p-1} \right)\\
=& \chi \left( \rho(t^{n+1}) \nabla e_c^{n}, \nabla (e_\rho^{n+1})^{2p-1} \right) \\
=& \frac{(2p-1)\chi }{p} \left( \rho(t^{n+1}) \nabla e_c^{n}, (e_\rho^{n+1})^{p-1} \nabla (e_\rho^{n+1})^{p} \right) \\
\leq& \frac{(2p-1)\chi }{p} \| \rho(t^{n+1}) (\nabla e_c^{n}) (e_\rho^{n+1})^{p-1} \|_{L^2}
\| \nabla (e_\rho^{n+1})^{p} \|_{L^2}\\
\leq& \frac{(2p-1)\chi }{p} \| \rho(t^{n+1}) (\nabla e_c^{n})\|_{L^{2p}}
\|e_\rho^{n+1} \|_{L^{2p}}^{p-1}
 \| \nabla (e_\rho^{n+1})^{p} \|_{L^2}\\
\leq&  \frac{(2p-1)\chi }{2p}
\left(\frac 1 \varepsilon \| \rho(t^{n+1}) (\nabla e_c^{n})\|_{L^{2p}} ^2\|e_\rho^{n+1}\|_{L^{2p}}^{2p-2} +\varepsilon \|\nabla(e_\rho^{n+1})^{p}\|_{L^2}^2 \right)\\
\leq&  \frac{(2p-1)\chi }{2p\varepsilon}
\left(\frac{1}{p}\|\rho(t^{n+1}) (\nabla e_c^{n})\|_{L^{2p}}^{2p} + \frac{2p-2}{2p}\|e_\rho^{n+1}\|_{L^{2p}}^{2p} \right)
+ \frac{(2p-1)\chi \varepsilon}{2p} \|\nabla(e_\rho^{n+1})^{p}\|_{L^2}^2 \\
\leq&  \frac{(2p-1)\chi }{2p\varepsilon}
\left(\frac{1}{p}\| \rho(t^{n+1})\|^{2p}_{L^{\infty}} \|\nabla e_c^{n}\|^{2p}_{L^{2p}}
+ \frac{2p-2}{2p}\|e_\rho^{n+1}\|_{L^{2p}}^{2p} \right)
+ \frac{(2p-1)\chi \varepsilon}{2p} \|\nabla(e_\rho^{n+1})^{p}\|_{L^2}^2 \\
\leq&  \frac{(2p-1)\chi }{2p\varepsilon}
\left(\frac{1}{p} C \|\nabla e_c^{n}\|^{2p}_{L^{2p}}
+ \frac{2p-2}{2p}\|e_\rho^{n+1}\|_{L^{2p}}^{2p} \right)
+ \frac{(2p-1)\chi \varepsilon}{2p} \|\nabla(e_\rho^{n+1})^{p}\|_{L^2}^2.
\end{aligned}
\end{equation}

Substituting \eqref{erhop2}-\eqref{erhop7} into \eqref{erhop1}, we have
\begin{equation}\label{erhop9}
\begin{aligned}
& \frac{1}{\delta t} \|e_\rho^{n+1}\|_{L^{2p}}^{2p}
+ \frac{2p-1}{p^2} \| \nabla (e_\rho^{n+1})^p \|_{L^2}^2 \\
\leq &\frac{1}{\delta t} \left( \frac{1}{2p} \|e_\rho^{n}\|_{L^{2p}}^{2p}
+\frac{2p-1}{2p} \|e_\rho^{n+1}\|_{L^{2p}}^{2p} \right)
+\frac{1}{2p} \| T_\rho(t^{n+1}) \|_{L^{2p}}^{2p} + \frac{2p-1}{2p} \| e_\rho^{n+1} \|_{L^{2p}}^{2p} \\
+ & \frac{(2p-1)\chi C_{gn} }{2p}
\left( \frac{\gamma}{\varepsilon} K_4 + \gamma K_4^{\frac{1}{2}} + \frac{\tau}{\varepsilon} \left\|\frac{c^{n}-c^{n-1}}{\delta t} \right\|_{L^2}^2 + \tau \left\|\frac{c^{n}-c^{n-1}}{\delta t} \right\|_{L^2} \right) \| e_\rho^{n+1} \|_{L^{2p}}^{2p}  \\
+ & \frac{(2p-1)\chi \varepsilon }{2p} \left(  C_{gn}(\gamma + \tau)+1 \right) \|\nabla(e_\rho^{n+1})^{p}\|_{L^2}^2 \\
+ &\frac{(2p-1)\chi }{2p\varepsilon}
\left(\frac{C}{p} \| \nabla e_c^{n}\|_{L^{2p}}^{2p} + \frac{2p-2}{2p}\|e_\rho^{n+1}\|_{L^{2p}}^{2p} \right).
\end{aligned}
\end{equation}
We now chose $\varepsilon$ small enough such that $\frac{p\chi \varepsilon }{2} \left(  C_{gn}(\gamma + \tau)+1 \right)\leq 1$.
Appealing the discrete energy dissipation \eqref{e8}, we know that the $\frac{c^{n}-c^{n-1}}{\delta t}$ remains bounded in $L^2$.
Then we infer from \eqref{erhop9} that there exists a constant $C$, such that
\begin{equation}\label{rg1}
\begin{aligned}
 \|e_\rho^{n+1}\|_{L^{2p}}^{2p}
\leq & \|e_\rho^{n}\|_{L^{2p}}^{2p}
+  C \delta t \left(\| T_\rho(t^{n+1}) \|_{L^{2p}}^{2p}+ \|e_\rho^{n+1}\|_{L^{2p}}^{2p}+
C \|\nabla e_c^{n}\|^{2p}_{L^{2p}} \right).
\end{aligned}
\end{equation}
According to the estimation of $\|\nabla e_c^{n}\|^{2p}_{L^{2p}}$ in Lemma \ref{gradientC},
we should discuss the parabolic-elliptic case and the parabolic-parabolic case, respectively.

In the parabolic-elliptic case,
appealing \eqref{rg1} leads to
\begin{equation}\label{rg4+}
\begin{aligned}
 \|e_\rho^{n+1}\|_{L^{2p}}^{2p}
\leq & \|e_\rho^{n}\|_{L^{2p}}^{2p}
+  C \delta t \left(\| T_\rho(t^{n+1}) \|_{L^{2p}}^{2p}+ \|e_\rho^{n+1}\|_{L^{2p}}^{2p}+
C ||e_\rho^{n+1}||_{L^{2p}}^{2p} \right).
\end{aligned}
\end{equation}
By summing and application of the discrete Gronwall Lemma, we obtain
\begin{equation}\label{rg5+}
\|e_\rho^{n+1}\|_{L^{2p}}^{2p}
\leq \left(\|e_\rho^{0}\|_{L_{2p}}^{2p} + C T \| T_\rho(t^{n+1}) \|_{L^{2p}}^{2p}\right)
\exp(CT).
\end{equation}
Since $e_\rho^{0}=0$ and $\|T_\rho(t^{n+1}) \|_{L^{2p}} \leq C \delta t$, we have $\|e_\rho^{n+1}\|_{L^{2p}} \leq C \delta t$.

In the parabolic-parabolic case,
setting $\eta_n=\max\limits_{k\leq n} \|e^k_\rho\|^{2p}_{L^{2p}}$ and gathering \eqref{fin} with \eqref{rg1} yield
\begin{equation}\label{fin6}
(1-C\delta t)\eta_{n+1}
\leq  \eta_{n} +  C (\delta t)^{2p}.
\end{equation}
Then the bound of $\|e_\rho^{n+1}\|_{L^{2p}}^{2p}$ follows the discrete Gronwall lemma.

To conclusion, whether the equation of $c$ is elliptic or parabolic, we have
\begin{align}\label{erhop10}
\|e_\rho^{n+1}\|_{L^{2p}}
\leq C \delta t, \ \ n=0,1,2,\ldots,N_t-1, \ \ 1<p<\infty.
\end{align}

Next, we estimate $L^p$-bound of the error $e_c^{n+1}$.
By taking inner product of the error equation \eqref{ec} with $(e_c^{n+1})^{2p-1}$ and applying the Green formula, we obtain
\begin{equation}\label{ecp1}
\begin{aligned}
&\frac{\tau}{\delta t} \| e_c^{n+1} \|_{L_{2p}}^{2p}
+\frac{2p-1}{p^2} \| \nabla (e_c^{n+1})^p \|_{L^2}^2
+\alpha \| e_c^{n+1} \|_{L_{2p}}^{2p} \\
= & \frac{\tau}{\delta t} \left( e_c^{n}, (e_c^{n+1})^{2p-1} \right)
+ \gamma \left( e_\rho^{n+1}, (e_c^{n+1})^{2p-1} \right)
+ \tau \left( T_{c}(t^{n+1}), (e_c^{n+1})^{2p-1} \right).
\end{aligned}
\end{equation}
Using H\"{o}lder inequality and Young's inequality, we have
\begin{align}
&\gamma \left( e_\rho^{n+1}, (e_c^{n+1})^{2p-1} \right)\nonumber\\
\leq &\gamma  \| e_\rho^{n+1} \|_{L_{2p}} \| e_c^{n+1} \|_{L_{2p}}^{2p-1} \label{ecp2}\\
\leq &\gamma \left( \frac{1}{2p} \|e_\rho^{n+1}\|_{L^{2p}}^{2p}
+\frac{2p-1}{2p} \|e_c^{n+1}\|_{L^{2p}}^{2p} \right), \label{ecp2add}
\end{align}
\begin{equation}\label{ecp3}
\begin{aligned}
\frac{\tau}{\delta t} \left( e_c^{n}, (e_c^{n+1})^{2p-1} \right)
\leq &\frac{\tau}{\delta t} \left( \frac{1}{2p} \|e_c^{n}\|_{L^{2p}}^{2p}
+\frac{2p-1}{2p} \|e_c^{n+1}\|_{L^{2p}}^{2p} \right),
\end{aligned}
\end{equation}
\begin{equation}\label{ecp4}
\begin{aligned}
\tau \left( T_{c}(t^{n+1}),  (e_c^{n+1})^{2p-1}\right)
\leq & \tau \left( \frac{1}{2p} \| T_c(t^{n+1}) \|_{L^{2p}}^{2p} + \frac{2p-1}{2p}
\| e_c^{n+1} \|_{L^{2p}}^{2p} \right).
\end{aligned}
\end{equation}
If $\tau=0$, then by \eqref{ecp2}
\begin{equation}\label{ecp5}
\begin{aligned}
\| e_c^{n+1} \|_{L_{2p}}
\leq  \frac{\gamma}{\alpha} \| e_\rho^{n+1} \|_{L_{2p}} \leq C \delta t, \ \  n=0,1,2,\ldots,N_t-1, \ \ 1<p<\infty.
\end{aligned}
\end{equation}
If $\tau > 0$, then substituting \eqref{ecp2add}-\eqref{ecp4} into \eqref{ecp1} yields
\begin{equation}\label{ecp5add}
\begin{aligned}
\| e_c^{n+1} \|_{L_{2p}}^{2p}
\leq &\frac{1}{2p} \|e_c^{n}\|_{L^{2p}}^{2p}
+ \frac{\gamma \delta t}{2p \tau} \|e_\rho^{n+1}\|_{L^{2p}}^{2p}
+ \frac{\delta t}{2p} \| T_c(t^{n+1}) \|_{L^{2p}}^{2p} \\
&+ \left( \frac{(2p-1)(\gamma+\tau)}{2p\tau} - \frac{\alpha}{\tau} \right) \delta t \| e_c^{n+1} \|_{L_{2p}}^{2p}.
\end{aligned}
\end{equation}
We take the sum of the above inequality \eqref{ecp5add} on $n$
and use the discrete Gronwall lemma to obtain
\begin{equation}\label{ecp7}
\begin{aligned}
\|e_c^{n+1}\|_{L^{2p}}^{2p}
\leq &\frac{1}{2p}\left(\|e_c^{0}\|_{L_{2p}}^{2p}
+ \frac{\gamma}{\tau}T \| e_\rho^{n+1} \|_{L^{2p}}^{2p}
+ T \| T_c(t^{n+1}) \|_{L^{2p}}^{2p} \right)\\
&\exp\left(\left( \frac{(2p-1)(\gamma+\tau)}{2p\tau} - \frac{\alpha}{\tau} \right) T \right).
\end{aligned}
\end{equation}
Since that $e_c^{0}=0$, $\|e_\rho^{n+1}\|_{L^{2p}}\leq C \delta t$ and $\|T_c(t^{n+1}) \|_{L^{2p}} \leq C \delta t$, we conclude
\begin{align}\label{ecp8}
\|e_c^{n+1}\|_{L^{2p}}
\leq C \delta t, \ \  n=0,1,2,\ldots,N_t-1, \ \ 1<p<\infty.
\end{align}

In summary, the proof is completed by estimates \eqref{erhop10} and \eqref{ecp8}.
\end{proof}

\section{Concluding remarks}
In this paper, we focused on numerical analysis of the semi-implicit Euler scheme \eqref{S1}-\eqref{S2} for the general KS model \eqref{2.1}-\eqref{1.4} in 2D, including the parabolic-elliptic system ($\tau=0$) and the parabolic-parabolic system ($\tau>0$).
We rigorously proved that the scheme \eqref{S1}-\eqref{S2} satisfies unconditionally all essential properties of the KS model, including the mass conservation,  positivity of the cell density, and the energy dissipation.
Furthermore, we derived the $L^p$-bounds $(1<p<\infty)$ for the  numerical solution $\rho^{n+1}$, and
carried out rigorous error analysis with optimal  error estimates in $L^p$-norm $(1<p<\infty)$.

There are several immediate directions for  future work: (i) constructing fully discrete schemes, based on the time discretization \eqref{S1}-\eqref{S2},  that can  preserve all essential properties of the KS equations at the fully discrete level; (ii) constructing second- and higher-order semi-discrete schemes that can satisfy essential properties of the KS equations; and (iii) extending the results of this paper to the three-dimensional case.

\end{document}